\newif\ifextmath
\renewcommand{\le}{\leqslant}
\renewcommand{\ge}{\geqslant}
\newcommand{\N}{\mathbb{N}}
\newcommand{\Z}{\mathbb{Z}}
\newcommand{\Q}{\mathbb{Q}}
\newcommand{\C}{\mathbb{C}}
\newcommand{\F}{\mathbb{F}}
\newcommand{\p}{\mathfrak{p}}
\newcommand{\PP}{\mathbb{P}}
\newcommand{\A}{\mathbb{A}}
\renewcommand{\L}{\mathcal{L}}
\renewcommand{\O}{\mathcal{O}}
\newcommand{\Gal}{\operatorname{Gal}}
\newcommand{\GL}{\operatorname{GL}}
\newcommand{\SL}{\operatorname{SL}}
\newcommand{\GSp}{\operatorname{GSp}}
\newcommand{\PGL}{\operatorname{PGL}}
\newcommand{\Fl}{{\F_\ell}}
\newcommand{\Flx}{{\F_\ell^\times}}
\newcommand{\Ker}{\operatorname{Ker}}
\renewcommand{\Im}{\operatorname{Im}}
\newcommand{\Eqn}{\operatorname{Eqn}}
\newcommand{\codim}{\operatorname{codim}}
\newcommand{\rank}{\operatorname{rk}}
\newcommand{\Res}{\operatorname{Res}}
\newcommand{\Hom}{\operatorname{Hom}}
\newcommand{\lcm}{\operatorname{lcm}}
\newcommand{\charf}{\mathbbm{1}}
\newcommand{\Pic}{\operatorname{Pic}}
\newcommand{\Eff}{\operatorname{Eff}}
\newcommand{\la}{\leftarrow}
\newcommand{\DivAdd}{\operatorname{DivAdd}}
\newcommand{\DivSub}{\operatorname{DivSub}}
\numberwithin{equation}{section}
\newtheorem{thm}[equation]{Theorem}
\newtheorem{lem}[equation]{Lemma}
\newtheorem{pro}[equation]{Proposition}
\theoremstyle{definition}
\newtheorem{de}[equation]{Definition}
\newtheorem{rk}[equation]{Remark}
\newcounter{para}[subsection]
\renewcommand{\thepara}{\thesubsection.\arabic{para}}
\newcommand\mypara{\par\refstepcounter{para}\noindent\textbf{\thepara.}\space}
\newcommand{\subjclass}[2][2010]{%
  \let\@oldtitle\@title%
  \gdef\@title{\@oldtitle\footnotetext{#1 \emph{Mathematics subject classification:} #2}}%
}
\newcommand{\keywords}[1]{%
  \let\@@oldtitle\@title%
  \gdef\@title{\@@oldtitle\footnotetext{\emph{Key words and phrases.} #1.}}%
}
\let\c@table\c@equation
\let\c@figure\c@equation
\title{Hensel-lifting torsion points on Jacobians \\ and Galois representations}
\subjclass{
11F80, % Galois representations
11Y40, % Algebraic number theory computations
%11-04, % Explicit machine computation and programs (not the theory of computation or programming)
14Q05, % Curves
14H40, % Jacobians, Prym varieties
14G10, % Zeta-functions and related questions
14G15, % Finite ground fields
14G20. % Local ground fields
}
\author{Nicolas Mascot\thanks{\href{mailto:nm116@aub.edu.lb}{nm116@aub.edu.lb}}}
\affil{\scriptsize{AUB, Beirut, Lebanon}}
\begin{document}

\maketitle

\begin{abstract}
Let~$\rho$ be a mod~$\ell$ Galois representation. We show how to compute~$\rho$ explicitly, given the characteristic polynomial of the image of the Frobenius at one prime~$p$ and a curve~$C$ whose Jacobian contains~$\rho$ in its~$\ell$-torsion. The main ingredient is a method to~$p$-adically lift torsion points on a Jacobian in the framework of Makdisi's algorithms.
\end{abstract}

\renewcommand{\abstractname}{Acknowledgements}
\begin{abstract}
The author thanks Aurel Page for his implementation of the computation of the Howell form of kernels in~\cite{gp} and his help with the proof of theorem~\ref{thm:Howell} and the formulation of the analysis of the performance of algorithm~\ref{alg:Vbasis}, Aurel Page and Bill Allombert for their help with~\cite{gp}'s C~language library, and Kamal Khuri-Makdisi for some useful conversations.

The computer algebra packages used for the computations presented in this paper were almost exclusively~\cite{gp}, plus a little bit of~\cite{Magma} and~\cite{Gap}.
\end{abstract}

\textbf{Keywords:} Galois representation, Jacobian,~$p$-adic, algorithm.

\bigskip

\section{Introduction}

\subsection{Statement of the problem and notations}

The notations set in this section will be used throughout the rest of this article.

\bigskip

Let~$\ell \in \N$ be a prime number, let~$d \in \N$, and let~$\rho : \Gal(\overline \Q/\Q) \longrightarrow \GL_d(\Fl)$ be a mod~$\ell$ Galois representation of degree~$d$. We would like to compute~$\rho$, that is to say determine explicitly a polynomial~$F(x) \in \Q[x]$ whose splitting field is the Galois number field corresponding to the kernel of~$\rho$, as well as an explicit indexation of the roots of~$F(x)$ (in some large enough field where~$F(x)$ splits completely) by the points of~$\Fl^d \setminus \{ 0 \}$ such that the Galois action on these roots corresponds to the action of~$\rho$ on~$\Fl^d \setminus \{ 0 \}$. Indeed, given such data and a prime~$p \in \N$ at which~$\rho$ is unramified, it is not very difficult to determine the image by~$\rho$ of the Frobenius at~$p$ up to conjugacy in~$\Im \rho$, even for very large~$p$, thanks to the technique presented in~\cite{Dok}.

Suppose that we have an explicit model (for instance, equations) for a proper, non-singular, geometrically connected curve~$C$ of genus~$g$ defined over~$\Q$, such that there exists a~$d$-dimensional~$\F_\ell$-subspace~$T_\rho$ of the~$\ell$-torsion~$J[\ell]$ of the Jacobian~$J$ of~$C$ such that~$T_\rho$ affords the Galois representation~$\rho$. We wish to use the knowledge of the curve~$C$ to compute~$\rho$. For example, we showed in~\cite{algo} and~\cite{companion} how to compute the Galois representations attached to classical modular forms, by taking~$C$ to be a modular curve.

In the case when~$C$ is a modular curve, we may use the Hecke action in order to single~$T_\rho$ out of~$J[\ell]$ (cf.~\cite{algo} for details), but we have no such tool at our disposal when~$C$ is a general curve. Therefore, we instead assume that there is (at least) one prime~$p \in \N$ at which~$\rho$ is unramified such that we know explicitly the characteristic polynomial~$\chi_\rho(x) \in \Fl[x]$ of the image by~$\rho$ of the Frobenius~$\Phi$ at~$p$. We actually make the requirement that~$p$ is distinct from~$\ell$ and that our model of~$C$ has good reduction at~$p$ (which implies that~$\rho$ is unramified at~$p$ by N\'eron-Ogg-Shafarevich). Furthermore, we suppose that the local~$L$-factor~$L_p(x) \in \Z[x]$ of~$C$ at~$p$ is known explicitly, and that~$\chi_\rho(x)$, which necessarily divides~$L_p(x)$ mod~$\ell$, is actually coprime mod~$\ell$ to its cofactor~$L_p(x) / \chi_\rho(x)$. This last requirement ensures that the knowledge of~$\chi_\rho(x)$ determines the subspace~$T_\rho$ of~$J[\ell]$ uniquely.

 The goal of this article is to present an algorithm that, given a model for~$C$, the prime~$p \in \N$ and the characteristic polynomial~$\chi_\rho(x)$, computes~$\rho$ in the above sense.

\begin{rk}
Since we have an explicit model of~$C$, the local~$L$-factor~$L_p(x)$ may in principle be recovered as the numerator of the Hasse-Weil zeta function of the reduction mod~$p$ of~$C$ by point-counting techniques. Here we assume that we have somehow been able to determine it. In practice, this often means that~$p$ cannot be too large.
\end{rk}

\begin{rk}
We do not exclude the case where~$T_\rho = J[\ell]$, i.e. where the representation we want to compute is that afforded by the whole~$\ell$-torsion of~$J$. In this case, the output of our algorithm may be called an~$\ell$-division polynomial of~$C$. Thanks to~\cite{Dok}, we can then compute the local factor~$L_{p'}(x) \bmod \ell$ in time polynomial in~$\log p'$ for primes~$p' \in \N$ of good reduction. By Chinese remainders, we can thus determine~$\#C(\F_{p'^n})$% in time polynomial in~$\log(p'^n)$
, at least in theory. This approach results in a point-counting method whose complexity is as good as~\cite{Gaudry}, but which is more general since it is not limited to hyperelliptic curves.
\end{rk}

\begin{rk} Although we focus on the case of representations of~$\Gal(\overline \Q/\Q)$ for the simplicity of exposition, it is completely straightforward to generalize the techniques presented in this article to the case of mod~$\ell$ representations of~$\Gal(\overline k / k)$ where~$k$ is a number field, by taking~$C$ to be defined over~$k$, and replacing~$p$ by a finite prime~$\p$ of~$k$ whose residual characteristic is distinct from~$\ell$ and at which~$C$ has good reduction.
\end{rk}

\begin{rk}
The case of a representation with values in~$\GL_d(\F_\lambda)$, where~$\F_\lambda$ is a finite extension of~$\Fl$, can also be treated by our methods (provided of course that the curve~$C$ is known), since restriction of scalars and the flexibility of~\cite{Dok} allows us to treat it as a representation with values in a subgroup of~$\GL_{d [\F_\lambda : \Fl]}(\F_\ell)$.
\end{rk}

\subsection{Outline}

Our strategy to compute~$\rho$ may be summarized as follows:

\begin{enumerate}
\item Compute~$d$ points forming a basis of the image of~$T_\rho$ in~$J(\F_q)[\ell]$, where~$q=p^a$ with~$a \in \N$ large enough to split~$\rho$ (i.e. large enough that the points of the reduction of~$T_\rho$ mod~$p$ are defined over~$\F_q$),
\item Fix an integer~$e \in \N$, and lift these points to approximations with accuracy~$O(p^e)$ of~$\ell$-torsion points in~$J(\Q_q)$, that is to say to points of~$J(\Z_q/p^e)$, where~$\Q_q$ denotes the unramified extension of~$\Q_p$ whose residue field is~$\F_q$ and~$\Z_q$ is the ring of integers of~$\Q_q$,
\item Compute all the~$\Fl$-linear combinations of these~$d$ points, thus obtaining a model of~$T_\rho$ over~$\Z_q/p^e$,
\item Evaluate a rational map~$\alpha : J \dashrightarrow \A^1$ defined over~$\Q$ at these~$\ell^d$ points, and form the monic polynomial~$F(x)$ whose roots are these values,
\item \label{Outline:check_injective} Check that these values are distinct, else take another map from~$J$ to~$\A^1$ and try again,
\item \label{Outline:ident} Identify~$F(x)$ as a polynomial with coefficients in~$\Q$, or, if this fails, start over with a larger value of~$e$.
\end{enumerate}

Indeed, for each nonzero~$t \in T_\rho$,~$\alpha(t)$ lies in the field of definition~$\Q(t)$ of~$t$, so the splitting field of~$F(x)$ is a subfield of the field corresponding to the kernel of~$\rho$, and agrees with it with high probability since most elements of a number field are primitive elements. More precisely, a sufficient condition for the Galois action on the roots of~$F(x)$ to reflect faithfully that on the points of~$T_\rho$ is that~$\alpha$ be injective on~$T_\rho$, which is what we check at step~\ref{Outline:check_injective}.

\begin{rk}
Because of the identification performed at step~\ref{Outline:ident}, the output of this algorithm is not guaranteed to be correct, and should be certified by methods such as~\cite{certif}.
\end{rk}

Throughout this algorithm, we use Kamal Khuri-Makdisi's methods~\cite{Mak1},~\cite{Mak2} to compute in~$J$. We give a brief presentation of these techniques in section~\ref{sect:Mak}, including a recipe to construct rational maps from~$J$ to~$\A^1$ and comments on the implementation over a~$p$-adic field.

The core of this article is devoted to the presentation of a technique to lift torsion points from~$\F_q$ to~$\Q_q$ (up to some arbitrary~$p$-adic accuracy) in Makdisi's framework. After differential calculus preliminaries in section \ref{sect:diff}, we show how to lift a point from~$J(\F_q)$ to~$J(\Z_q/p^e)$ for any~$e \in \N$ in section \ref{sect:lift}, and then how to ensure that the lift of an~$\ell$-torsion point over~$\F_q$ remains~$\ell$-torsion over~$\Z_q/p^e$ in section \ref{sect:torslift}.

Finally, we explain in detail how to use these tools to compute Galois representations in section \ref{sect:galrep}, and we give explicit examples in section \ref{sect:examples}.

\begin{rk}
As we will comment on in the next sections, in Makdisi's method, points on the Jacobian are represented by matrices of a certain size. The crucial point is that most matrices of this given size do not represent any point on the Jacobian; thus lifting a point from~$J(\F_q)$ to~$J(\Z_q/p^e)$ is a non-trivial process. In~\cite{algo}, we used Makdisi's methods over~$\C$, and we noticed that the accuracy deteriorated little-by-little after each operation in~$J$, in that the matrix crept away from the locus corresponding to actual points on the Jacobian. This is the reason that initially prompted us to look for a method to ``fix'' representations of points on~$J$ in Makdisi's method given with poor accuracy. We then realized that performing this ``fix''~$p$-adically led to an algorithm to compute Galois representations. The methods presented in this article ought to also be able to ``fix'' complex approximations of points in Makidisi's method, although we have not actually tried them this way.
\end{rk}

\section{Computing in the Jacobian}\label{sect:Mak}

\subsection{Review of Makdisi's algorithms}

We start by recalling how Makdisi's algorithms work; indeed we will reuse some of their ideas in this article. None of the material present in this subsection is original work.

\begin{rk}Makdisi's algorithms are very flexible and can be implemented in many different ways, cf.~\cite{Mak1} and~\cite{Bruin} for more details. The version we give here is the one that is best suited to our purpose. Connoisseurs will recognize the medium model in representation B$_0$.
\end{rk}

\subsubsection{Representation of spaces of functions on the curve}

As in the previous section, we consider a nice algebraic curve~$C$ of genus~$g$. However, the ground field~$k$ need not be~$\Q$ anymore, but could be any perfect field over which one may perform linear algebra effectively. Given a divisor~$D$ on~$C$, we will denote the corresponding Riemann-Roch space by
\[ \L(D) =\{ f \in  k(C)^\times \ \vert \ (f) + D \geqslant 0 \} \cup \{ 0 \}. \]

We begin by picking an effective divisor~$D_0$ defined over~$k$ on~$C$, of degree~$d_0 \geqslant 2g+1$, and we define 
\[ V_n = \L(nD_0) \]
for all~$n \in \N$. The space~$V_2$ will play a particular r\^ole, so we set
\[ V = V_2 \]
for brevity.

Let~$n_Z \ge 5d_0+1$ be an integer. We will assume that~$k$ is large enough that we can fulfill the following condition:
\begin{equation}
\text{There exists an effective divisor } Z \text{ formed of } n_Z \text{ \emph{distinct} points } P_i \text{ in } C(k) \setminus \operatorname{supp} D_0. \label{nZ_ratpts}
\end{equation}

\begin{rk}
For efficiency reasons, we should take~$d_0$ as close to~$2g+1$ as possible and $n_Z$ as close to~$5d_0+1$ as possible (bearing in mind constraints such as those described in remark~\ref{rk:rationality_D0_Ei} and subsection~\ref{sect:Frob}). We will therefore assume that~$d_0 = O(g)$ and that~$n_Z=O(g)$ for complexity analyses. 
\end{rk}

All the computations performed by Makdisi's algorithms take place in the space~$V_5$ (and so do the ones presented in the article). We represent an element~$v \in V_5$ by a column vector in~$k^{n_Z}$ containing its values~$v(P_i)$ at the~$n_Z$  points~$P_i \in C(k)$. Since such a function has degree at most~$5 d_0$, this representation is faithful, and defines an embedding of~$V_5$ into~$k^{n_Z}$. 

In this representation, addition and multiplication of functions is done point-wise, which will simplify our task in the next sections. Given two column vectors~$c, c' \in k^{n_Z}$, we denote by~$c \odot c'$ their point-wise product, that is to say the column vector whose~$i$-th entry is~$c_i c'_i$ for all~$i$. Thus if~$c$ and~$c'$ represent functions on~$C$,~$c \odot c'$ represents their product (provided that all these functions lie in~$V_5$).

We will work with subspaces of~$V_5$. Such a subspace~$S$ will be represented by a matrix
\[ M = \begin{pmatrix} s_1(P_1) & \cdots & s_{\dim S} (P_1) \\ \vdots & & \vdots \\ s_1(P_{n_Z}) & \cdots & s_{\dim S} (P_{n_Z}) \end{pmatrix}\]
of size~$n_Z \times \dim S$ whose~$i,j$-entry is~$s_j(P_i)$, where the~$s_j \in S$ form a~$k$-basis of~$S$ and the~$P_i$ are as above. Thus the columns of~$M$ represent a basis of~$S$ in the above sense. Since~$S$ has many~$k$-bases, this representation is of course not unique. We will call such a matrix a matrix \emph{representing}~$S$, or a matrix \emph{for}~$S$.

Given a column~$c \in k^{n_Z}$ and a matrix~$M$ with~$n_Z$ rows, we denote by~$c \odot M$ the matrix whose column are the~$c \odot M_j$, where the~$M_j$ are the columns of~$M$. Thus if~$c$ represents a function~$f$ and~$M$ represents a subspace~$S$ of~$V_5$, then~$f \odot M$ represents the subspace~$fS = \{ f s, \ s \in S\}$ (provided that this is still a subspace of~$V_5$).

Later on, we will frequently need to compute a matrix~$K_S$ of size~$(n_Z-\dim S) \times n_Z$ whose rows represent independent linear equations defining~$S$ as a subspace of~$k^{n_Z}$. Such a matrix can be obtained simply as the transpose of the kernel of the transpose of~$M$, and will be referred to as an \emph{equation matrix} for~$S$.\label{def_eqnmatrix}

Conversely, given a matrix~$A$ whose rows represent linear equations, we call a matrix whose columns represent a basis of the space of solutions of these linear equations a \emph{kernel matrix} of~$A$.

\begin{rk}\label{rk:padic_conventions}
In this article, we will actually work with~$k=\F_q$ or~$\Q_q$, where~$q = p^a$ is a prime power, and we impose that the~$n_Z$ points~$P_i \in C(k)$ remain distinct mod~$p$. The Hasse bounds show that this is always achievable, possibly at the price of enlarging~$a$, and also that~$q$ cannot be very small. We will also suppose that matrices representing a subspace $S$ of $V_5$, as well as kernel (resp. equation) matrices, have coefficients in the ring of integers~$\Z_q$ of~$\Q_q$, and that their columns (resp. rows) remain linearly independent when reduced mod~$p$. We will show how to preserve this condition throughout our computations, and more generally how to perform linear algebra over~$\Z_q$ without loss of~$p$-adic accuracy, in section \ref{sect:Howell} below.
\end{rk}

By Riemann-Roch, every point~$x \in J=\Pic^0(C)$ is of the form~$x=[D-D_0]$ for some (non-unique) effective divisor~$D$ of degree~$d_0$. Such a point will be represented by a matrix~$W_D$ for the~$d_0$-codimensional subspace~$\L(2D_0-D) \subsetneq V$
of~$V$. Thus~$x$ is represented by an effective divisor~$D$ such that~$[D-D_0]=x$, itself represented by the subspace~$\L(2D_0-D)$, itself represented by a matrix~$W_D$ of size~$n_Z \times d_W$, where \[d_W = d_0+1-g\]
is the dimension of such a subspace.
Since~$\deg(2D_0 - D) = d_0 \ge 2g+1$, the Riemann-Roch space~$\L(2D_0-D)$ is base-point-free, so this representation is faithful. However, it is obviously not unique.

In particular, the space~$V_1 = \L(D_0)$ represents the origin of~$J$, so we denote by~$W_0$ a matrix representing this space (thus $W_0$ means $W_D$ for $D=D_0$, not $D=0$).

We suppose that we can compute an explicit basis of~$V_1=\L(D_0)=W_0$. This presents various difficulties depending on what kind of model of~$C$ we have, cf. for instance~\cite{Hess}. Thanks to algorithm \ref{alg:divadd} introduced below, we can then compute matrices representing~$V_n$ for any~$n \in \N$. We initialize Makdisi's algorithms by precomputing matrices~$W_0$,~$M_V$, and~$M_{V_{3}}$ representing respectively~$V_1$,~$V=V_2$, and~$V_3$, as well as equation matrices~$K_V$ and~$K_{V_3}$ for~$V$ and~$V_3$. Although all the action will take place in~$V_5$, we do not need to compute a matrix (nor an equation matrix) for~$V_5$.

\begin{rk}
The dissension between the notations~$M_V$, $M_{V_3}$ (matrices for~$\L(2D_0)$ and ~$\L(3D_0)$) and~$W_D$ (matrix for the subspace~$\L(2D_0-D)$ representing~$[D-D_0] \in J$) is meant as a reminder that~$M_V$ and ~$M_{V_3}$ are ``structure constants'' describing the curve~$C$, whereas~$W_D$ represents a (variable) point on~$J$.
\end{rk}

\subsubsection{Arithmetic on divisors}

Before we present Makdisi's algorithms \emph{per se}, we show how we can add and subtract divisors at the level of Riemann-Roch spaces. Let us start with addition. We set the following notation: given two subspaces~$S, T \subset k^{n_Z}$, we let~$S \cdot T$ be the space spanned by the products~$st$ where~$s \in S$ and~$t \in T$.

\medskip

\begin{algorithm}[H]
\KwIn{Matrices~$M_A$ and~$M_B$ for the spaces~$\L(A)$ and~$\L(B)$, where~$A$,~$B$ are arbitrary divisors such that~$A,B,A+B \le 5D_0$ and~$\deg A, \deg B \ge d_0$.}
\KwOut{A matrix for the space~$\L(A+B)$.}
$d \la \dim \L(A) + \dim \L(B) +g-1$\tcp*{$d = \dim \L(A+B)$} \label{alg:divadd:dim}
$S \la \emptyset$\;
\Repeat{$\#S=d$}{
$s\la\#S$\;
\For{$j \la 1$ \KwTo~$d-s$}
{\label{alg:divadd:prods}
$a \la$ a random combination of columns of~$M_A$\;
$b \la$ a random combination of columns of~$M_B$\;
Append~$a \odot b$ to~$S$\;
}
$S \la$ a basis of the subspace of~$k^{n_Z}$ spanned by~$S$\; \label{alg:divadd:dep}
}
\Return the matrix whose columns are the elements of~$S$\;
\caption{DivAdd}
\label{alg:divadd}
\end{algorithm}

\begin{proof}
Makdisi proves in~\cite[lemma 2.2]{Mak1} that if~$A$ and~$B$ both have degree at least~$2g+1$, then the multiplication map
\[ \L(A) \otimes \L(B) \longrightarrow \L(A+B) \]
 is surjective; in other words, we have~$\L(A+B)=\L(A) \cdot \L(B)$. The dimension of this space is computed at line \ref{alg:divadd:dim} by Riemann-Roch. In each iteration of the \textbf{for} loop,~$a$ (resp.~$b$)  represents a random element of~$\L(A)$(resp.~$\L(B)$), so~$a \odot b$ represents an element of~$\L(A) \cdot \L(B) = \L(A+B)$. We keep forming such products until we generate~$\L(A+B)$.
 
The condition~$A$,~$B$,~$A+B \le 5D_0$ ensures that these spaces are subspaces of~$V_5$, so that their elements are represented faithfully by their values at the points~$P_i$.
\end{proof}

\begin{rk}
If~$k$ is a very small finite field, it can be advantageous to execute the \textbf{for} loop at line \ref{alg:divadd:prods} a little more than~$d-\#S$ times so as to ensure we get a generating set of~$\L(A+B)$ with high probability (A detailed analysis of the situation is given in~\cite{Bruin}). However, our assumption \eqref{nZ_ratpts} implies that~$k$ cannot be too small, so that in practice it is extremely rare that we need to execute the \textbf{repeat}...\textbf{until} loop more than once even if we take just~$d$ products~$a\odot b$.
\end{rk}

\begin{rk}
If~$k$ is a~$p$-adic field and if we follow the conventions set in remark~\ref{rk:padic_conventions}, the linear algebra for the extraction of a basis of the span of~$S$ at line \ref{alg:divadd:dep} can be performed mod~$p$ to save time.
\end{rk}

\bigskip

We now show how to subtract divisors at the level of the Riemann-Roch spaces. We begin with the case where the dimension of the result is known in advance, which is frequently the case thanks to the Riemann-Roch theorem.

\medskip

\begin{algorithm}[H]
\KwIn{Matrices~$M_A$ and~$M_B$ for the spaces~$\L(A)$ and~$\L(B)$ attached to divisors~$A,B \le 5D_0$ such that~$A-B \le3D_0$ and~$\deg B \ge d_0$, and the dimension~$d$ of~$\L(A-B)$.}
\KwOut{A matrix for the space~$\L(A-B)$.}
$K_A \la$ an equation matrix for~$\L(A)$\;
\Repeat{$\#$ of columns of~$S = d$}{
\For{$j \la 1$ \KwTo~$2$}
{
$b_j \la$ a random combination of columns of~$M_B$\;
$K_j \la K_A$\;
\For{$i \la 1$ \KwTo~$n_Z$}{
Multiply the~$i$-th column of~$K_j$ by the~$i$-th coordinate of~$b_j$\; 
}}
$K \la$ vertical stack of~$K_{V_3}$,~$K_1$, and~$K_2$\;
$S \la$ a kernel matrix for~$K$\; \label{alg:divsub:S}
}
\Return~$S$\;
\caption{DivSub}
\label{alg:divsub}
\end{algorithm}

\begin{proof}
As in algorithm \ref{alg:divadd}, the condition~$A,B,A-B \le 5D_0$ ensure that the elements of the corresponding Riemann-Roch spaces are faithfully represented by their evaluation at~$Z$.

Since~$\deg B \ge d_0 > 2g$, the space~$\L(B)$ is base-point-free; in other words, the inequality
\[ \inf_{f \in \L(B)} (f) \ge -B \]
is actually an equality. It follows that
\[ \L(A-B) = \{ v \in V_5 \ \vert \ v \L(B) \subset \L(A) \}. \]
Actually, we have ~$\L(A-B) = \{ v \in V_3 \ \vert \ v \L(B) \subset \L(A) \}$ since~$A-B \le 3D_0$.

Now,~$b_1$ and~$b_2$ represent random elements~$f_1$ and~$f_2$ of~$\L(B)$, and the matrix~$S$ computed at line \ref{alg:divsub:S} represents~$\ker K_{V_3} \, \cap \, \ker K_1 \, \cap \, \ker K_2$. But the first of these kernels represents precisely~$V_3$ by definition of~$K_{V_3}$, whereas the last two represent~$\{ v \in k^{n_Z}  \ \vert \ v \odot b_j \in \L(A) \}$ for~$j=1,2$. Thus~$S$ represents the subspace~$L = \{ v \in V_3 \ \vert \ v f_1, v f_2 \in \L(A) \}$ of~$V_5$.

If the inequality
\[ \inf\big((f_1),(f_2)\big) \ge -B \]
is actually an equality, then~$L$ is exactly~$\L(A-B)$ by the same logic as above. Else,~$L$ is a strict super-space of~$\L(A-B)$, but we can detect this by comparing its dimension to the known dimension of~$\L(A-B)$, so we simply try again with another choice of~$b_1$ and~$b_2$.
\end{proof}

\begin{rk}
As pointed out in section 3 of~\cite{Mak2}, the condition~$\inf\big((f_1),(f_2)\big) = -B$ can be understood as the condition that two elements of an ideal of a Dedekind domain generate that ideal. This explains why we must take at least two elements~$f_1, f_2 \in \L(B)$. This condition will then be satisfied with high probability, except if~$k$ is a very small finite field, in which case it can be a good idea to consider more than two elements~$f_1, f_2 \in \L(B)$ (cf.~\cite{Bruin} for a detailed analysis). Due to assumption~\eqref{nZ_ratpts},~$k$ cannot be too small, so two elements are enough in practice.
\end{rk}

In the case where~$\dim \L(A-B)$ is not known in advance, we can still compute~$\L(A-B)$ by taking~$b_j$ ranging over the columns~$M_B$, which represent a basis~$(f_j)$ of~$\L(B)$ so that~$\inf (f_j) = -B$. Of course, this leads to solving a larger linear system, and is therefore slower, than with two random elements~$f_1, f_2 \in \L(B)$.

This more difficult case is in particular needed in the following situation: given matrices~$W_D$,~$W_{D'}$ representing points~$x=[D-D_0], x' = [D'-D_0] \in J$, we can test whether~$x = x'$, that is to say whether~$D \sim D'$, since~$\DivSub(W_D,W_{D'})$ represents~$\L(D-D')$ which has dimension 1 in this case, and 0 else. In particular, we can test whether~$W_D$ represents~$0 \in J$ by taking~$W_{D'}=W_0$.

\subsubsection{Arithmetic in the Jacobian}

Thanks to these two basic operations, Makdisi manages to compute in the Jacobian using only linear algebra operations over~$k$. For instance, the following algorithm computes the negative of the sum of two points of~$J$. Since we have the representation~$W_0$ of~$0 \in J$, we can then perform additions and subtractions in~$J$.

\medskip

\begin{algorithm}[H]
\KwIn{Matrices~$W_1$ and~$W_2$ for the spaces~$\L(2D_0-D_1)$ and~$\L(2D_0-D_2)$ encoding the points~$x_1=[D_1-D_0]$ and~$x_2 = [D_2-D_0]$ of~$J$.}
\KwOut{A matrix for a space~$\L(2D_0-D_3)$ encoding~$x_3=[D_3-D_0] \in J$ such that~$x_1 + x_2 + x_3 = 0$.}
$W_{12} \la \DivAdd(W_1,W_2)$ \tcp*{$\L(4D_0-D_1-D_2)$}
$W'_{12} \la \DivSub(W_{12},W_0)$ \tcp*{$\L(3D_0-D_1-D_2)$}
$c \la$ a non-zero combination of columns of~$W'_{12}$ \label{alg:addflip:choose_f} \tcp*{Represents~$f$}
\tcp{where~$(f) = -3D_0+D_1+D_2+D_3$ for some~$D_3 \ge 0$ of degree~$d_0$}
$W_{123} \la c \odot M_V$ \tcp*{$\L(5D_0-D_1-D_2-D_3)$}
$W_3 \la \DivSub(W_{123},W'_{12})$ \tcp*{$\L(2D_0-D_3)$}
\Return~$W_3$\;
\caption{Add-flip}
\label{alg:addflip}
\end{algorithm}

\medskip

Note that on both times that we call DivSub, the dimension of the result, namely~$d_W$, is known in advance by Riemann-Roch.

\bigskip

Finally, although each point of~$J$ is represented by a subspace of~$V$, itself viewed as a subspace of~$k^{n_Z}$, clearly not every~$d_W$-dimensional subspace of~$k^{n_Z}$ corresponds to a point of~$J$. In~\cite{Mak1}, Makdisi gives the following algorithm:

\medskip

\begin{algorithm}[H]
\KwIn{A matrix~$W$ of size~$n_Z \times d_W$ with linearly independent columns representing a subspace of~$V$.}
\KwOut{\True if~$W$ represents a subspace of the form~$\L(2D_0-D)$ for some effective~$D$ of degree~$d_0$, \False else.}
$w \la$ a non-zero combination of columns of~$W$\; \label{alg:membership:w}
$W' \la \DivSub(w \odot M_V,W)$\; \label{alg:membership:critical}
$n \la$ number of columns of $W'$\;
\Return \True if~$n=d_W$, \False if~$n<d_W$\;
\caption{Membership test}
\label{alg:membership}
\end{algorithm}

\begin{proof}
The idea is to check whether the elements of the subspace~$S$ of~$V$ represented by~$W$ have many common zeros. Thus the column vector~$w$ represents a non-zero element~$f$ of this subspace, whose divisor is of the form~$(f)=-2D_0+D+D'$ where~$D$ is the largest divisor such that~$W \subset \L(2D_0-D)$ and~$D'$ is another effective divisor, so that~$W'$ represents~$\L(2D_0-D')$. The larger the locus~$D$ of common zeros of~$S$, the smaller~$D'$, so the larger~$\dim W'$. See~\cite[theorem/algorithm 3.14]{Mak1} for more details.
\end{proof}

Loosely speaking, the goal of sections \ref{sect:diff} and \ref{sect:lift} will be to determine the differential of the map~$W \mapsto W'$, so as to be able to deform the representation of a~$p$-adic point of~$J$ by a matrix with entries mod~$p^e$ into a representation by a matrix mod~$p^{2e}$.

\begin{rk}
Since Makdisi's algorithms involve linear algebra in size~$O(g) \times O(g)$, their complexities are~$O(g^\omega)$ operations in the ground field, where~$2 \le \omega < 3$ is such that two matrices of size~$n \times n$ can be multiplied in~$O(n^\omega)$ operations.
\end{rk}

\subsection{Extra operations}

We now present complements of ours to Makdisi's algorithms, some of which are inspired by~\cite{Bruin}.

\subsubsection{Fast exponentiation and add-flip chains}

Algorithm \ref{alg:addflip} does not compute the sum of two points of~$J$, but rather their ``add-flip'', that is to say the negative of that sum. Therefore the usual notion of \emph{addition chain} for fast exponentiation needs to be adapted.

\begin{de}
Let~$m \in \Z$. An \emph{add-flip chain} for~$m$ of \emph{length}~$n \in \N$ is a finite sequence of triples of integers
\[ (m_0,i_0,j_0), (m_1,i_1,j_1), \cdots , (m_n, i_n, j_n) \]
such that~$m_0 = 0$,~$m_1 = 1$,~$m_n = m$, and for all~$2 \leqslant s \leqslant n$, we have~$0 \leqslant i_s, j_s < s$ and~$m_s = - m_{i_s} - m_{j_s}$.
\end{de}

The value of~$i_0, j_0, i_1, j_1$ does not matter and is left undefined.

\bigskip

Since we have the representation~$W_0$ of~$0 \in J$, we can perform any addition by performing two consecutive add-flips. Therefore, for all~$m \in \Z$ distinct form 0 and 1, their exists an add-flip chain of length~$O(\log \vert m \vert)$; and conversely an add-flip chain for~$m$ must clearly have length at least~$O(\log \vert m \vert)$. 

Short add-flip chains for a given~$m \in \Z$ can be found by putting~$m$ in \emph{non-adjacent form} in the sense of~\cite[section 9.1.4]{hyperell_crypto}. This method has the advantage of being extremely fast. Adapting the continued fraction method described in~\cite{cfracchain} results in a much slower algorithm that yields chains which are on average only a few percent shorter.

\subsubsection{Pairings}

Later in section \ref{sect:galrep}, we will generate points of an~$\Fl$-subspace~$T_\rho \subset J[\ell]$ at random, and we will need a tool to detect linear relations between them so as to know whether we have obtained a generating set of~$T_\rho$ yet. For this, we will use parings.

Let~$m \in \N$. The Weil pairing is defined as
\[ \begin{array}{rccl} e_m:& \bigwedge^2 J(k)[m] & \longrightarrow & \mu_m(k) \\ & (x,y) & \longmapsto & \displaystyle \frac{f_x(D_y)}{f_y(D_x)}, \end{array} \]
where~$D_x \sim x$ and~$D_y \sim y$ are divisors whose supports do not intersect, and where~$f_x$ and~$f_y$ are functions of~$C$ such that~$(f_x) = m D_x$ and~$(f_y)=m D_y$. It has the disadvantage of being skew-symmetric, which means it is useless for our purpose when the space~$T_\rho$ is totally isotropic for example. Therefore, we prefer to use the less-known Frey-R\"uck pairing~\cite{FreyRuck}
\[ \begin{array}{rccl} \{\cdot,\cdot\}_m: & J(k)[m] \times J(k)/m J(k) & \longrightarrow & k^\times / k^{\times m} \\ & (x,y) & \longmapsto & \displaystyle f_x(D_y). \end{array} \]
In the special case where~$k = \F_q$ is a finite field containing~$\mu_m$, we can linearize it by composing it with
\[ \begin{array}{ccccl} k^\times / k^{\times m} & \longrightarrow & \mu_m & \longrightarrow & \Z/m\Z \\ z & \longmapsto & z^{(q-1)/m} & \longmapsto & \log_{\zeta} z^{(q-1)/m} \end{array} \]
where~$\zeta \in \mu_m$ is a primitive~$m$-th root of 1 and~$\log_\zeta$ denotes the corresponding discrete logarithm. We will denote the corresponding version of the pairing by 
\[ [ \cdot , \cdot ]_m : J(\F_q)[m] \times J(\F_q)/mJ(\F_q) \longrightarrow \Z/m\Z, \]
the choice of~$\zeta$ being implicit. This pairing is perfect, and allows us to construct linear forms on~$J[m]$ simply as~$[\cdot,y]_m$ for any~$y \in J(\F_q)$, and thus to detect linear relations satisfied by a finite collection of points of~$J[m]$ if we can generate sufficiently many elements~$y \in J(\F_q)/mJ(\F_q)$.

\bigskip

An implementation of the Frey-R\"uck pairing in Makdisi's framework is described in~\cite{Bruin}. We describe here our own implementation, which is simpler (for instance it stays in~$V_5=\L(5 D_0)$ whereas~\cite{Bruin} goes up to~$\L(7 D_0)$).

We start with the following auxiliary procedure, which computes the value of a function at a divisor, and may thus be viewed as a generalization in Makdisi's framework of the concept of resultant:

\medskip

\begin{algorithm}[H]
\KwIn{A column~$c$ representing a non-zero function~$f \in V_3$ and a matrix~$W_D$ representing~$\L(2D_0-D)$, with~$\deg D = d_0$.}
\KwOut{$f(D)$ if the supports of $D$ and $D_0$ do not intersect, else \FAIL.}
$w_1,\cdots,w_{d_W} \la$ the columns of~$W_D$, where~$d_W=\dim V-d_0$ as before\;
\tcp{Find supplement of $\L(2D_0-D)$ in $V=\L(2D_0)$}
Choose columns~$s_1, \cdots, s_{d_0}$ of~$M_V$ such that~$s_1, \cdots, s_{d_0}, w_1,\cdots,w_{d_W}$ are linearly independent\;
$d'_W \la 4d_0+1-g$ \tcp*{dimension of~$\L(5D_0-D)$}
$w'_1,\cdots,w'_{d'_W} \la$ columns of $\DivAdd(W_D,M_{V_3})$ \tcp*{$\L(5D_0-D)$}
\tcp{Find supplement of $\L(5D_0-D)$ in $V_5 = \L(5D_0)$}
Choose columns~$v_1, \cdots, v_{d_0}$ of~$M_V$ and~$u_1, \cdots, u_{d_0}$ of~$M_{V_3}$ such that $t_1, \cdots, t_{d_0}, w'_1,\cdots,w'_{d'_W}$ are linearly independent, where $t_i \la u_i \odot v_i$ \linebreak for all~$i$\;
%$\{ t_1, \cdots, t_{d_0} \} \la$ elements of~$\L(5D_0) = V \cdot V_3$ such that~$t_1, \cdots, t_{d_0}, w'_1,\cdots,w'_{d'_W}$ are linearly independent\;
\tcp{Find relations}
$A \la$ matrix with columns~$s_1, \cdots, s_{d_0}, t_1, \cdots, t_{d_0}, w'_1,\cdots,w'_{d'_W}$\;
$K \la$ matrix whose columns form a basis of~$\ker A$\; \label{alg:norm:K1}
$K_s \la$ rows~$1$ to~$d_0$ of~$K$\;
$K_t \la$ rows~$d_0+1$ to~$2d_0$ of~$K$\;
$M_1 \la K_t K_s^{-1}$\;
$\Delta_1 \la \det M_1$\;
\lIf{$\Delta_1=0$}{\FAIL}
\tcp{Find relations}
$A \la$ matrix with columns~$c\odot s_1, \cdots, c \odot s_{d_0}, t_1, \cdots, t_{d_0}, w'_1,\cdots,w'_{d'_W}$\;
$K \la$ matrix whose columns form a basis of~$\ker A$\;
$K_s \la$ rows~$1$ to~$d_0$ of~$K$\;
$K_t \la$ rows~$d_0+1$ to~$2d_0$ of~$K$\;
$M_f \la K_t K_s^{-1}$\;
$\Delta_f \la \det M_f$\;
\Return~$\Delta_f/\Delta_1$\;
\caption{Evaluating a function at a divisor}
\label{alg:norm}
\end{algorithm}

\begin{proof}
View $V_5 = \L(5D_0)$ and all its subspaces as embedded into $k^{n_Z}$ as explained in section \ref{sect:Mak}, and define
\[ L_D = \L(2D_0-D), \quad L'_D = \L(5D_0-D) \]
for brevity.

We have~$V = \L(2D_0) = S \oplus L_D$, where~$S$ is the subspace spanned by the~$s_i$; similarly~$V_5 = \L(5D_0) = T \oplus L'_D$ where~$T$ is the subspace spanned by the~$t_i = u_i \odot v_i$.

By definition,
\[ L_D = \{ v \in V \, \vert \, v_{\vert D} = 0 \}, \]
so that~$\O_D \simeq V / L_D \simeq S$; and similarly~$\O_D \simeq V_5 / L'_D \simeq T$.

Since~$f \in V_3 = \L(3D_0)$, multiplication by~$f$ induces a map 
\[ \xymatrix{\mu_f : S \ \ar@{^(->}[r] & V \ar[r]^f & V_5 \ar@{->>}[r] & T} \]
and~$f(D)$ is the determinant of this map seen through the isomorphisms~$S \simeq \O_D$ and~$T \simeq \O_D$.

Unfortunately these isomorphisms are not explicit, so it would make no sense to simply take the determinant of the matrix expressing this map on the~$s_i$ and the~$t_i$. This is why we divide this determinant by that of the ``identity'' map
\[ \xymatrix{\charf: S \ \ar@{^(->}[r] & V \ar[r]^1 & V_5 \ar@{->>}[r] & T} \]
induced by the inclusion of~$V$ into~$V_5$.

More specifically, at line \ref{alg:norm:K1} we find relations between the~$s_i$ and the basis \linebreak~$t_1, \cdots, t_{d_0}, w'_1,\cdots,w'_{d'_W}$ of~$V_5 = T \oplus W'_D$, and we project these relations in~$T = V_5 / W'_{D}$ at the next two lines by dropping the rows of $K$ corresponding to the~$w'_i$. The matrix~$K_s$ formed of the~$s$-rows of~$K$ is necessarily invertible since~$t_1, \cdots, t_{d_0}, w'_1,\cdots,w'_{d'_W}$ are linearly independent, so~$M_1$ is well-defined and is the negative of the matrix of the map~$\charf$ with respect to the bases~$s_i$ and~$t_i$. Its determinant~$\Delta_1$ is zero iff. there exists a nonzero element of~$S$ that lies in~$L'_D$, which means that the supports of~$D$ and~$D_0$ are not disjoint, in which case we return~$\FAIL$. Else, by repeating the previous steps with the~$c \odot s_i$ instead of the~$s_i$, we get $M_f$, the negative of the matrix of~$\mu_f$, and we recover~$f(D)$ as its determinant~$\Delta_f$ divided by~$\Delta_1$.

The complexity of this algorithm is~$O(g^\omega)$.
\end{proof}

\begin{rk}
It is actually more efficient to compute~$\Delta_1$ as~$\det(K_t)/\det(K_s)$ than as~$\det(K_t K_s^{-1})$; we show the less efficient way here only for clarity. The same goes of course for~$\Delta_f$.
\end{rk}

Algorithm \ref{alg:norm} is only valid for~$f\in V_3$, which is in general not the case for the function~$f_x$ in the definition of the Frey-R\"uck pairing. Therefore the implementation of the pairing is still not completely straightforward. Our solution, inspired by~\cite{Bruin}, is presented below; we assume that algorithm \ref{alg:addflip} has been modified so as to return the column~$c$ chosen at line \ref{alg:addflip:choose_f} as well as~$W_3$.

\medskip

\begin{algorithm}[H]
\KwIn{Matrices~$W_T$ and~$W_D$ representing~$\L(2D_0-T)$ and~$\L(2D_0-D)$ respectively, and~$m \in \N$ such that~$[T-D_0] \in J[m]$.}
\KwOut{An element of~$k^\times$ representing~$\{ [T-D_0],[D-D_0]\}_m \in k^\times / k^{\times m}$.}
$(m_0,i_0,j_0), \cdots, (m_n, i_n, j_n) \la$ an add-flip chain for~$m$\;
Initialize two vectors~$H$ and~$T$ indexed by~$0 \cdots n$ with~$H[0] \la 1$,~$H[1] \la 1$,~$T[0] \la W_0$,~$W[1] \la W_T$\;
$f,W_{D'_0} \la$ AddFlip($W_0,W_0$)\;
\For{$s \la 2$ \KwTo~$n$}{
$c_s,W_s \la$ Add-flip($T[i_s],T[j_s]$)\; \label{alg:FR:addflip}
$T[s] \la W_s$\;
$H[s] \la \frac{f_s(D'_0)}{f_s(D) H[i_s] H[j_s]}$, where $f_s$ is the function represented by $c_s$\label{alg:FR:H}\;
\tcp{Using algorithm \ref{alg:norm} twice, on $(c_s,W_D)$ and $(c_s,W_{D'_0})$}
}
$c_\infty \la$ a non-zero combination of columns of~$V$ such that each of the columns of $c_\infty \odot W_0$ lies in the span of the columns of $T[n]$\; \label{alg:FR:certiftors} 
\Return~$H[n] \frac{f_\infty(D)}{f_\infty(D'_0)}$, where $f_\infty$ is the function represented by $c_\infty$\;
\tcp{Using algorithm \ref{alg:norm} twice again}
\caption{Frey-R\"uck pairing}
\label{alg:FR}
\end{algorithm}

\pagebreak

\begin{proof} (Cf.~\cite[p. 39]{Bruin} and~\cite[section 4]{Miller})
For each~$s \le n$, denote by~$T_s$ the unique effective divisor  of degree~$d_0$ such that the matrix~$W_s$ represents the space~$\L(2D_0-T_s)$. Clearly~$T[s] = W_{T_s}$ and~$[T_s-D_0]=m_s [T-D_0]$ for all~$s$. In particular, for~$s=n$ we find that~$[T_n-D_0]=0$, so the space \[ \L(D_0-T_n) = \{ v \in V \ \vert \ v \L(D_0) \subset \L(2D_0-T_n) \} \] is 1-dimensional. As the matrices $W_0$ and $T[n] = W_n$ represent respectively the spaces $\L(D_0)$ and $\L(2D_0-T[n])$, the column~$c_\infty$ computed at line~\ref{alg:FR:certiftors} represents a non-zero element~$f_\infty$ of this space, which therefore satisfies $(f_\infty) = T_n-D_0$ and is unique up to multiplication by a nonzero constant.

Define inductively a sequence of functions~$h_0, \cdots, h_n \in k(C)^\times$ by~$h_0=h_1=1$ and~$h_s=1/h_{i_s} h_{j_s} f_s$ for~$s \ge 2$. Line \ref{alg:FR:H} ensures that~$H[s] = h_s(D-D'_0)$ for all~$s$. Besides, we have~$(f_s) = -3D_0+T_{i_s}+T_{j_s}+T_s$ for all $s$ by construction, so an induction on $s$ shows that
\[ (h_s) = m_s T - T_s - (m_s-1) D_0 \]
for all~$s$. Since~$(f_\infty)=T_n-D_0$, we get~$(h_n / f_\infty) = m (T-D_0)$, so that
\[ \{ [T-D_0],[D-D_0]\}_m = (h_n / f_\infty)(D-D'_0) = H[n]  \frac{f_\infty(D'_0)}{f_\infty(D)} \]
as~$D'_0 \sim D_0$.

The complexity of this algorithm is~$O(g^\omega \log m)$ operations in~$k$, to which one should add the cost of the discrete logarithm in~$\mu_m$ if one wants to use the linearized version of the pairing.
\end{proof}

\begin{rk}
We have introduced the divisor~$D'_0$ because the functions~$f_s$ and~$f_\infty$, being elements of~$\L(3D_0)$, have poles at~$D_0$, and therefore cannot be evaluated there.
\end{rk}

\begin{rk}
This algorithm will fail if algorithm \ref{alg:norm} fails or returns 0. In this case, one should try again, making sure that at line \ref{alg:addflip:choose_f} of algorithm \ref{alg:addflip}, the function~$f$ (which is called~$f_s$ in algorithm \ref{alg:FR}) is chosen randomly, and possibly after replacing~$D$ with a linearly equivalent divisor (using the same strategy as for the construction of~$D'_0$). In practice, failure is rare since we work over a finite field large enough that the curve has enough rational points to satisfy \eqref{nZ_ratpts}.

Note by the way that it is important here to always choose~$f$ randomly. Indeed, the add-flip algorithm \ref{alg:addflip} chooses~$f$ as a nonzero element of a space which is computed as a kernel, and most computer algebra systems give kernels by bases in echelon form. As a result, if for instance we always chose the first basis vector for~$f$, then some coordinates of~$f$ would always be~$0$; in other words~$f$ would always vanish at some fixed points, which would lead to systematic failure of algorithm \ref{alg:norm}.
\end{rk}

\begin{rk}
It is straightforward to modify algorithm \ref{alg:FR} so as to compute Weil pairings if desired.
\end{rk}

\subsubsection{Local charts and evaluation maps}

We now give a construction of Galois-equivariant rational maps~$J \dashrightarrow \A^1$ inspired by the method for modular curves described in~\cite[section 3.6]{algo}. In this construction, we have in mind the case where the ground field is a number field~$k_0$ and where we deal with points of~$J(\overline{k_0})$, and our goal is to obtain a map yielding values of reasonable arithmetic height (cf~\cite{algo} for details). Of course, condition \ref{nZ_ratpts} will not, in general, be satisfiable if we insist that the divisor~$Z$ be formed of points in~$C(k_0)$, so we use points over a~$p$-adic field~$k \supset k_0$ instead. In other words, we actually apply Makdisi's algorithms to the base change of~$C$ from~$k_0$  to~$k$. 

We begin by constructing a map with values in the projective space~$\PP(V)$. For this, we assume that we are able to find effective divisors~$E_1$ and~$E_2$, both of degree~$d_0-g$. Then~$D_0-E_1$ has degree~$g$, so given a sufficiently generic point~$x \in J$, Riemann-Roch shows that there exists a generically unique effective divisor~$E_x$ of degree~$g$ such that
\begin{equation}
[D_0-E_1-E_x] = x.
\label{eqn:eval:Ex}
\end{equation}
Another application of Riemann-Roch then shows that the space~$\L(2D_0-E_1-E_2-E_x)$ has dimension 1, so we get a map
\begin{equation}
\begin{array}{rcl} J & \dashrightarrow & \PP(V) \\ x &  \longmapsto & \L(2D_0-E_1-E_2-E_x) \end{array}
\label{eqn:eval}
\end{equation}
which is only a rational map due to the genericity assumptions on~$x$.

The point of this definition is that in Makdisi's framework, there are many divisors~$D$ representing a given~$x \in J$, that is to say such that~$[D-D_0]=x$. This map is thus useful to turn the representation~$\L(2D_0-D)$ of~$x$ into something that does not depend on~$D$, but only on~$x$. Implementing it is fairly straightforward:

\medskip

\begin{algorithm}[H]
\KwIn{A matrix~$W_D$ for the space~$\L(2D_0-D)$ representing a point~$x=[D-D_0] \in J$, and matrices~$W_1$ and~$W_2$ for the spaces~$\L(2D_0-E_1)$ and~$\L(2D_0-E_2)$, respectively.}
\KwOut{The value in~$\PP(V)$ of the map \eqref{eqn:eval} at~$x$.}
$S_1' \la \DivAdd(W_D,W_1)$ \tcp*{$\L(4D_0-D-E_1)$}
$S_1 \la \DivSub( S'_1, M_V)$ \tcp*{$\L(2D_0-D-E_1)$} 
\lIf{number of columns of $S_1 > 1$}{\Return \FAIL}
$s_1 \la$ the lone column of~$S_1$ \tcp*{$(s_1)=-2D_0+D+E_1+E_x$}
$S'_2 \la \DivSub(s_1 \odot M_V,W_D)$ \tcp*{$\L(2D_0-E_1-E_x)$} \label{alg:eval:Arman}
$S''_2 \la  \DivAdd(S'_2, W_2)$ \tcp*{$\L(4D_0-E_1-E_2-E_x)$}
$S_2 \la  \DivSub(S''_2,M_V)$ \tcp*{$\L(2D_0-E_1-E_2-E_x)$}
\lIf{number of columns of $S_2 > 1$}{\Return \FAIL}
%$s_2 \la$ a non-zero element of~$S_2$\;
\Return~$S_2$\;
\caption{Evaluation of a rational map~$J \dashrightarrow \PP(V)$}
\label{alg:eval}
\end{algorithm}

\begin{proof}
The matrix $S_1$ represents the space $\L(2D_0-D-E_1)$, which is generically of dimension 1 by Riemann-Roch. Thus~$s_1$ is well-defined up to multiplication by a constant, and its divisor is~$(s_1)=-2D_0+D+E_1+E_x$.% Therefore~$S_2 = \L(2D_0-E_1-E_2-E_x)$.
\end{proof}

\begin{rk}\label{rk:rationality_D0_Ei}
We must ensure that the effective divisors~$D_0$,~$E_1$, and~$E_2$ are actually defined over~$k_0$ (as opposed to~$k$) if we want the map~\eqref{eqn:eval} to be~$\Gal(\overline{k_0} / k_0)$-equivariant. For~$D_0$, this is not a problem in practice, since the only requirement that we need to satisfy is that~$d_0 \overset{\text{def}}{=} \deg D_0 \ge 2g+1$ (however we should try to chose~$D_0$ so that~$d_0$ is as close to~$2g+1$ as possible for efficiency). On the other hand,~$E_1$ and~$E_2$ must be of degree exactly~$d_0-g$, and this condition can be more difficult to satisfy over a number field; but if we have to, we can take~$E_1$ and~$E_2$ to be defined over a finite extension of~$k_0$, and form symmetric combinations of the values obtained for these divisors and their Galois conjugates.
\end{rk}

\begin{rk}\label{rk:chart}
The map~\eqref{eqn:eval} may be used as a local coordinate chart on~$J$, and for this purpose,~$E_1$ and~$E_2$ need not be defined over~$k_0$. One should however be aware that the domain on which this map is an immersion may be smaller than its domain of definition.
\end{rk}

%\begin{rk}
%The subset of~$J$ on which the genericity assumptions made in the above proof are satisfied is a Zariski open subset of~$J$ which may be smaller than the set of definition of the rational map, but in practice this is not a problem.
%\end{rk}

We now show how to turn the map \eqref{eqn:eval} into an~$\A^1$-valued map. In~\cite{algo}, we chose two rational points~$P, Q \in C(k)$ away from the supports of~$D_0$,~$E_1$, and~$E_2$, and we composed the map \eqref{eqn:eval} with the map~$s_2 \longmapsto s_2(P)/s_2(Q)$, were~$s_2$ is a nonzero element of the 1-dimensional space~$\L(2D_0-E_1-E_2-E_x)$. This approach was suitable for modular curves, since they have plenty of rational cusps, but is harder to adapt to a general curve; we therefore propose a different approach.

We fix a basis~$v_1, v_2, \cdots$ of~$V$, where the~$v_i$ are defined over~$k_0$ (again for Galois-equivariance reasons). For instance, if~$C$ is given to us by a plane equation~$f(x,y)=0$ over~$k_0$, we will have obtained the~$v_i$ as explicit elements of~$k_0(x,y)$ by a Riemann-Roch space computation over~$k_0$. We then write
$s_2 = \sum \lambda_i v_i$, where the~$\lambda_i$ are scalars, and simply return the ratio~$\lambda_{i_1}/\lambda_{i_2}$, where~$i_1$ and~$i_2$ are fixed indices. This is well-defined, since~$s_2$ is well-defined up to multiplication by a constant, and straightforward to implement in Makdisi's framework. In summary, our~$\A^1$-valued map is
\begin{equation}
\alpha \colon \begin{array}{rcl} J & \dashrightarrow & \A^1 \\ x &  \longmapsto & \lambda_{i_1}/\lambda_{i_2}, \end{array}
\label{eqn:eval:A1}
\end{equation}
where the~$\lambda_i$ are such that~$\sum_i \lambda_i v_i$ spans~$\L(2D_0-E_1-E_2-E_x)$.

The complexity of one map evaluation is~$O(g^\omega)$ operations in~$k$.

\begin{rk}
When we compute Galois representations, we will evaluate~$\alpha$ at the points of an~$\Fl$-subspace~$T_\rho \subset J[\ell]$, and we will need~$\alpha$ to be injective on~$T_\rho$ so that the Galois action on its values faithfully reflects the Galois action on the points of~$T_\rho$. Then the polynomial
\[ F(x) = \prod_{t \in T_\rho} \big(x-\alpha(t)\big) \]
will have coefficients in~$k_0$, and its splitting field over~$k_0$ will be a subfield of the field cut out by the Galois representation, and will agree with this field with high probability. We can ensure that this is indeed the case by checking that~$F(x)$ is squarefree.

We must ensure that the divisors~$E_1$ and~$E_2$ are such that~$[E_1-E_2] \not \in T_\rho$ if~$\ell \neq 2$. Indeed, the divisor of~$s_2$ is of the form~$(s_2)=-2D_0+E_1+E_2+E_x+E'_x$ where~$E'_x$ is effective of degree~$g$, and \eqref{eqn:eval:Ex} shows that
\[ x = [D_0-E_1-E_x] = -[D_0-E_2-E'_x]. \]
Thus if there is a~$t \in T_\rho$ such that~$[E_1-E_2] = t$, then~$E'_x = E_{t-x}$, so that the map~$x \mapsto s_2$ cannot be injective on~$T_\rho$ unless~$\ell=2$.
\end{rk}

\begin{rk}
In particular,~$E_1$ and~$E_2$ should be distinct. In some cases, we may be able to find a effective divisor~$E_1$ of degree~$d_0-g$ defined over~$k_0$, but not a second one; in this case, we can replace the map \eqref{eqn:eval} with the map 
\begin{equation}
\begin{array}{rcl} J & \dashrightarrow & \operatorname{Grass}_{d_W}(V) \\ x &  \longmapsto & \L(2D_0-E_1-E_x), \end{array}
\label{eqn:eval2}
\end{equation}
where $\operatorname{Grass}_{d_W}(V)$ is the Grassmanian of~$V$ parametrising subspaces of dimension~$d_W = d_0+1-g$; in other words, interrupt algorithm \ref{alg:eval} at line \ref{alg:eval:Arman}. Note that by definition,~$E_1+E_x$ is the only effective divisor~$E \ge E_1$ such that~$[E-D_0] = x$, so that the space~$\L(2D_0-E_1-E_x)$ may be viewed as the normal form representation of~$x$ (with respect ot~$E_1$) in Makdisi's framework.

The advantage of this approach is that we need only one~$k_0$-rational effective divisor of degree~$d_0-g$, and that it requires half less computation; its disadvantage is that \eqref{eqn:eval2} now takes values in the Grassmanian of~$V$, since~$\dim \L(2D_0-E_1-E_x) = d_W > 1$ for generic~$x$. We can still use Grassmanian coordinates with respect to the basis~$(v_i)$ of~$V$ to turn \eqref{eqn:eval2} into an~$\A^1$-valued map, but experiment shows that the values thus obtained have a worse arithmetic height. In what follows, we will only use \eqref{eqn:eval:A1}.
\end{rk}

\subsubsection{Makdisi over~$p$-adic fields}\label{sect:Howell}

We will later apply Makdisi's algorithms over the ground field~$k=\Q_q$. This means that we must be able to perform linear algebra over~$k$, namely kernel and equation matrices (these are actually the same up to transposition), which is a non-trivial problem since we can only represent elements of~$k$ on a computer with finite~$p$-adic accuracy.

For this, we use Page's recent implementation in~\cite{gp} of the computation of kernels in Howell form over rings of the form~$R=\Z_q/p^e$, where~$\Z_q$ is the ring of integers of~$\Q_q$ and~$e \in \N$. Recall that the Howell form of an~$R$-submodule~$M$ of~$R^d$~($d \in \N$) is a canonical generating set of this module, which is essentially characterized by the fact that these generators are in reduced echelon form, and that for all $d' \le d$, each element of~$M$ whose first~$d'$ entries are~$0$ is an $R$-combination of the generators whose first~$d'$ entries are~$0$; cf.~\cite{Howell} or~\cite{Storjohann} (beware that these references consider row spans, whereas we deal with column spans in this article). The following result shows that if we consider an approximation up to~$O(p^e)$ of a matrix~$A$ with coefficients in~$\Z_q$ which has ``good reduction'', i.e. whose rank does not decrease after reduction mod~$p$, then we can compute a kernel matrix of~$A$ with the same accuracy and satisfying the same conditions, and thus use Makdisi's algorithms without loss of~$p$-adic accuracy.

\begin{thm}\label{thm:Howell}
Let~$k$ be a non-Archimedian local field with ring of integers~$\O$, uniformizer~$\varpi$ and residue field~$\kappa$, and let~$A$ be an~$m \times n$ matrix with coefficients in~$\O$, representing a linear map~$T : \O^n \rightarrow \O^m$. Let~$e \in \N$, and let~$K_e$ be the kernel of the induced map~$T_e : (\O/\varpi^e)^n \rightarrow (\O/\varpi^e)^m$. If~$\rank_k(A) = \rank_\kappa (A \bmod \varpi)$, then the Howell generators of~$K_e$ that are nonzero mod~$\varpi$ form an approximation mod~$\varpi^e$ of an~$\O$-basis of~$\ker T$.
\end{thm}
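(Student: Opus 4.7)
The strategy is to combine the Smith normal form of $A$ (to pin down $\ker T$ and $K_e$ structurally) with a careful analysis of how the Howell form of $K_e$ reflects that structure, and then conclude by Nakayama's lemma.

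First I would invoke Smith normal form over the principal ideal ring $\O$ to write $A = U^{-1} D V^{-1}$ with $U \in \GL_m(\O)$, $V \in \GL_n(\O)$, and $D$ diagonal whose nonzero entries are $\varpi^{a_1}, \dots, \varpi^{a_r}$ where $r = \rank_k A$. The good reduction hypothesis $\rank_k A = \rank_\kappa(A \bmod \varpi)$ immediately forces $a_1 = \cdots = a_r = 0$, so $D$ has $r$ ones on the diagonal and zeros elsewhere. Reading off this decomposition, $\ker T$ is freely spanned by the last $n-r$ columns of $V$, hence is $\O$-free of rank $n-r$. Moreover, given any $h \in K_e$ with arbitrary lift $\hat h \in \O^n$, one has $A\hat h \in \varpi^e \O^m$, and using $D$ one can subtract an element $c \in \varpi^e \O^n$ to produce $\tilde h = \hat h - c \in \ker T$ still reducing to $h$ modulo $\varpi^e$. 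The same computation shows that the reduction map $\ker T \to K_e$ is surjective with kernel $\varpi^e \ker T$, so $K_e$ is itself free of rank $n-r$ over $\O/\varpi^e$.

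Next I would analyze the Howell form of the free module $K_e$. The claim is that it contains exactly $n-r$ generators nonzero modulo $\varpi$, and that their reductions modulo $\varpi$ form a $\kappa$-basis of $K_e \bmod \varpi \subset \kappa^n$. That there are at least $n-r$ such generators is a counting argument: the Howell generators span $K_e$, so their reductions span $K_e \bmod \varpi$ (the zero-mod-$\varpi$ ones contributing nothing), and $K_e \bmod \varpi$ has $\kappa$-dimension $n-r$. For the upper bound, one observes that a basis vector of a free $(\O/\varpi^e)$-module cannot lie entirely in $\varpi (\O/\varpi^e)^n$ --- otherwise it would be annihilated by $\varpi^{e-1}$, contradicting freeness --- and that any ``extra'' Howell generators beyond an echelon basis arise from $\varpi$-adic cancellations between basis vectors with non-unit leading entries, so lie in $\varpi K_e$ and are zero modulo $\varpi$. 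The reduction conventions built into the definition of Howell form then force the nonzero-mod-$\varpi$ generators to have pairwise distinct ``true'' leading rows in $\kappa^n$, so their reductions are $\kappa$-linearly independent and form a basis of $K_e \bmod \varpi$.

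With these two ingredients the theorem follows. Pick lifts $\tilde h_1, \dots, \tilde h_{n-r} \in \ker T$ of the $n-r$ nonzero-mod-$\varpi$ Howell generators $h_1, \dots, h_{n-r}$. Their reductions modulo $\varpi$ are the $h_i \bmod \varpi$, which form a basis of $\ker T / \varpi \ker T \cong K_e \bmod \varpi$. By Nakayama's lemma applied to the finitely generated $\O$-module $\ker T$, the $\tilde h_i$ generate $\ker T$; and since $\ker T$ is $\O$-free of rank $n-r$, these $n-r$ generators must form an $\O$-basis, which is exactly the assertion.

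The hardest part is the middle step: rigorously establishing that every ``extra'' Howell generator of the free module $K_e$ lies in $\varpi K_e$, and that the reductions of the nonzero-mod-$\varpi$ generators are $\kappa$-linearly independent. This requires carefully unpacking the interaction between the Howell bottom-right closure, the reduction conventions for entries above leading positions, and the $\varpi$-adic filtration on $\O/\varpi^e$; invoking the uniqueness of the Howell form as established in the cited references of Howell and Storjohann is essential.
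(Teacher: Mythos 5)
Your opening and closing steps are sound, and the Smith-normal-form part is in fact a pleasant alternative to the paper's argument: the hypothesis $\rank_k(A)=\rank_\kappa(A\bmod\varpi)$ forces all elementary divisors to be units, so $\ker T$ is a free direct summand of $\O^n$, the reduction map $\ker T\to K_e$ is surjective, and $K_e$ is free over $\O/\varpi^e$; the paper obtains the corresponding generation statement instead by an induction on $e$ starting from a Hermite basis of $\ker T$. The final Nakayama argument is also fine, \emph{granted} your middle claim. The problem is that the middle claim --- that the Howell form of $K_e$ has exactly $n-r$ generators that are nonzero mod $\varpi$ and that their reductions are $\kappa$-linearly independent --- is precisely where the whole content of the theorem sits, and you do not prove it: you assert that ``extra'' Howell generators lie in $\varpi K_e$, and that the reduction conventions force the unimodular generators to have pairwise distinct ``true'' leading rows mod $\varpi$. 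The second assertion is false. Take $\O/\varpi^2$, $n=4$, and let $K_e$ be the reduction of the saturated module spanned by $(\varpi,0,1,1)$ and $(0,\varpi,1,2)$ (this is a $K_e$ of the required type, for a suitable $A$ of rank $2$ with good reduction). Its Howell form consists of the columns $(\varpi,0,1,1)$, $(0,\varpi,1,2)$, $(0,0,\varpi,0)$, $(0,0,0,\varpi)$: the two unimodular generators reduce mod $\varpi$ to $(0,0,1,1)$ and $(0,0,1,2)$, which both lead in row $3$. Their independence is true, but it holds because they are reductions of a basis of the saturated module $\ker T$ --- which is exactly what is to be proven --- not because of any separation of leading positions; so the mechanism you invoke cannot carry the proof, and the count ``exactly $n-r$'' is likewise left unjustified.

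To close the gap you would need an actual analysis of the Howell form, and the paper's route shows how to avoid proving a general statement about free submodules of $(\O/\varpi^e)^n$ from scratch: it fixes an explicit echelon (Hermite) $\O$-basis $h_1,\dots,h_d$ of $\ker T$, proves that its reduction generates $K_e$ (your SNF surjectivity already gives this), and then reads off the Howell form from this concrete echelon generating set --- the $h_i \bmod \varpi^e$ appear among the Howell generators, and any further generators are combinations of them vanishing mod $\varpi$. I would suggest grafting that final identification onto your SNF setup: starting from the generating set $h_1\bmod\varpi^e,\dots,h_d\bmod\varpi^e$ and tracking the Howell reduction applied to it is a finite, checkable argument, whereas the abstract claim about Howell forms of free modules, as you formulate and justify it, is a genuine gap.
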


\begin{rk}
There can be Howell generators of~$K_e$ that are~$0 \bmod \varpi$, as demonstrated by the example~$A = (1 \ \ \varpi)$,~$e=2$.
\end{rk}

\begin{proof}
Let us write~$K_\infty = \ker T$, and~$K_k$ for the kernel of the induced map~$k^n \rightarrow k^m$, so that~$K_\infty = K_k \, \cap \, \O^n$. Since~$\O$ is a PID, we can consider an~$\O$-basis~$h_1, \cdots, h_d \in \O^n$ of~$K_\infty$ in Hermite normal form.

We claim that the~$h_i$ are linearly independent mod~$\varpi$. Indeed, let~$\lambda_i \in \O$, and let~$x = \sum_i \lambda_i h_i \in K_\infty$. If~$x \equiv 0 \bmod \varpi$, say~$x = \varpi y$ for some~$y \in \O^n$, then~$y \in \O^n \cap K_k = K_\infty$, which shows that the~$\lambda_i$ are all 0 mod~$\varpi$ since the~$h_i$ are linearly independent over~$k$.

We can then prove by induction on~$e$ that the~$h_i$ generate~$K_e$. Indeed, for~$e=1$ this follows from the above and the fact that the rank of~$A$ does not decrease after reduction mod~$\varpi$. Suppose this is true for~$e$, and let~$x  \in  \O^n$ be such that~$x \bmod \varpi^{e+1}$ lies in~$K_{e+1}$. Then~$x \bmod \varpi^e$ lies in~$K_e$, so there exist~$\lambda_i \in \O$ and~$y \in \O^n$ such that~$x = \sum_i \lambda_i h_i + \varpi^e y$; therefore
\[ 0 \equiv Ax = \varpi^e A y \bmod \varpi^{e+1}. \]
Dividing by~$\varpi^e$ shows that~$y \bmod \varpi$ lies in~$K_1$, and is thus in the~$\O$-span of the~$h_i$, and so is~$x$.

It follows that the Howell form of~$K_e$ agrees with that of the~$h_i \bmod \varpi^e$; however the~$h_i$ are in echelon form, so they are part of their Howell form mod~$\varpi^e$, and the other vectors must be linear combinations of them that are 0 mod~$\varpi$. %The author thanks Aurel Page for this argument.
\end{proof}

\begin{rk}
It is shown in~\cite{Storjohann} that the Howell form of~$\ker A \bmod p^e$ can be computed with the same complexity as if~$R=\Z_q/p^e$ were a field, namely~$O(n^{\omega})$ operations in~$R$ for~$A$ of size~$n \times n$. Therefore, we can execute the algorithms presented in this section over~$R$ with the same number of operations in~$R$ as if~$R$ were a field.
\end{rk}

\subsubsection{Galois action}\label{sect:Frob}

We now suppose that the curve~$C$ is defined over~$\Q_p$ (in practice, it is actually defined over~$\Q$), and that we have a matrix~$W_D$ with coefficients in~$\Z_q/p^e$ representing a point~$x \in J(\Z_q/p^e)$ for some~$e \in \N$ (in particular, if~$e=1$ then~$W_D$ represents a point in~$J(\F_q)$). We would like to compute a matrix~$W_{D'}$ representing~$x^{\Phi}$ to the same $p$-adic accuracy, where~$\Phi \in \Gal(\Q_q/\Q_p)$ is the Frobenius. This is easy, provided that the divisors~$D_0$ and~$Z$ are defined over~$\Q_p$, and that we know how~$\Phi$ acts on the support of~$Z$ (in practice this information should be recorded at the creation of~$Z$).

\medskip

\begin{algorithm}[H]
\KwIn{A matrix~$W_D$ representing a point~$x \in J(\Q_q)$.}
\KwOut{A matrix~$W_{D'}$ representing the point~$x^\Phi \in J(\Q_q)$.}
$W \la W_D^\Phi$\;
Permute the rows of~$W$ by the permutation induced by~$\Phi^{-1}$ on~$Z$\;
\Return~$W$\;
\caption{Frobenius}
\label{alg:Frob}
\end{algorithm}

\begin{proof}
Recall that the~$j$-th column of~$W_D$ represents a function~$w_j$ such that the~$w_j$ form~$\Q_q$-basis of the space~$\L(2D_0-D)$ representing~$x$, in that the~$i$-th row of~$W_D$ contains the value of the~$w_j$ at the~$i$-th point in the support of~$Z$. Since~$x=[D-D_0]$, we have~$x^\Phi = [D^\Phi-D_0]$, so~$x^\Phi$ is represented by~$\L(2D_0-D^\Phi) = \L(2D_0-D)^\Phi$, but~$v^\Phi(P) = \big(v(P^{\Phi^{-1}})\big)^\Phi$ for all~$P \in C(\Q_q)$.

The complexity of this algorithm is of course~$O(g^2)$ operations in~$\Q_q$.
\end{proof}

\begin{rk}
We use a model for~$\Q_q$ of the form~$\Q_p(\theta)/T(\theta)$, where~$T(x) \in \Z[x]$ is a lift of an irreducible polynomial over~$\F_p$. In this model, every element of~$\Q_q$ is of the form~$\sum_i c_i \theta^i$ with~$c_i \in \Q_p$, and the image of such an element by~$\Phi$ is simply~$\sum_i c_i \theta'^i$, where~$\theta'$ is the unique root of~$T(x)$ congruent to~$\theta^p$ mod~$p$.
\end{rk}

\section{Differentiation of linear algebra operations}\label{sect:diff}

Let us now suppose that the ground field is a finite extension~$k$ of~$\Q_p$ with ring of integers~$\O$ and uniformizer~$\varpi$ (in practice,~$k=\Q_q$,~$\O=\Z_q$, and~$\varpi=p$). Makdisi's algorithms rely on linear algebra; in particular, they frequently involve the computation of a kernel matrix or of an equation matrix (a.k.a. left kernel) of a given matrix~$A$ with coefficients in~$\O$. The goal of this section is to determine how these deform under~$\varpi$-adically small perturbations of~$A$.

However, the problem thus presented is ill-defined, since a given vector space (in this case, the right or left kernel of~$A$) admits many different bases in general. We therefore start by rigidifying the notions of kernel matrix and of equation matrix defined at page \pageref{def_eqnmatrix}. For simplicity, we restrict ourselves to the case where the matrix~$A$ has full rank.

Let us treat the case of equation matrices first, since only this case will be used later. We assume that we have set a rule that, given a matrix~$_nA_r$ (this notation means that~$A$ has~$n$ rows and~$r$ columns) with coefficients in~$\O$ and whose reduction mod~$\varpi$ has rank~$r$, assigns to it a \emph{supplement matrix}, that is to say a matrix~$_nS_{n-r}$ with coefficients in~$\O$ such that the augmented matrix~$A^+ = \left( _nA_r \ \vert \ _nS_{n-r} \right)$ is invertible over~$\O$, and which only depends on the reduction of~$A$ mod~$\varpi$. For instance, we can impose that the~$j$-th column of~$S$ has a~$1$ at row~$i_j$ and~$0$ everywhere else, in such a way that~$i_1 < \cdots < i_{n-r}$ and that the~$i_j$ are as small as possible with respect to some total ordering; this is what~\cite{gp}'s function \verb?matsupplement? does.

Let us decompose the inverse~$B=(A^+)^{-1}$ of~$A^+$ over~$\O$ as
\[ B = \left( \begin{array}{c} _rF_n \\ \hline _{n-r}E_n \end{array} \right). \]
We have
\[ I_n = B A^+ =  \left( \begin{array}{c|c} _rFA_r & _rFS_{n-r} \\ \hline _{n-r}EA_r & _{n-r}ES_{n-r}  \end{array} \right), \]
whence the relations~$FA = I_r$,~$FS = {}_r0_{n-r}$,~$EA = {}_{n-r}0_r$, and~$ES = I_{n-r}$. In particular, the rows of~$E$ are linearly independent (both over $k$ and over its residue field), so that~$E$ is an equation matrix for~$A$. We call~$E$ \emph{the} equation matrix of~$A$, and denote it by~$E = \Eqn(A)$, the rule determining~$S$ being implicit. Besides, we call~$F$ the \emph{F-complement} of~$A$.

The advantage of this definition is that it is easy to differentiate~$\Eqn(A)$ with respect to~$A$. Indeed, let~$_nH_r$ be a matrix of the same size as~$A$ and whose entries lie in~$\varpi^e \O$ for some integer~$e \ge 1$. The matrix~$A+H$ is congruent to~$A$ mod~$\varpi$, so the augmented matrix~$ \left( A+H \ \vert \ S \right)$ is still invertible (with the same~$S$) over~$\O$, whence~$(A+H)^+ = A^++H^0$ where~$H^0 = \left( _nH_r \ \vert \ _n0_{n-r} \right)$. Therefore
\begin{align*}
\big((A+H)^+\big)^{-1} &= B - B H^0 B + O(\varpi^{2e}) \\
&= \left( \begin{array}{c} _rF_n \\ \hline _{n-r}E_n \end{array} \right) - \left( \begin{array}{c} _rF_n \\ \hline _{n-r}E_n \end{array} \right) \left( _nH_r \ \vert \ _n0_{n-r} \right) \left( \begin{array}{c} _rF_n \\ \hline _{n-r}E_n \end{array} \right) + O(\varpi^{2e}) \\
&= \left( \begin{array}{c} _rF_n \\ \hline _{n-r}E_n \end{array} \right) - \left( \begin{array}{c} _rFHF_n \\ \hline _{n-r}EHF_n \end{array} \right) + O(\varpi^{2e}), \\
\end{align*}
which shows that
\begin{equation}
\Eqn(A+H) = \Eqn(A) - \Eqn(A) H F + O(\varpi^{2e}) \label{eqn:diffeqn}
\end{equation}
where~$F$ is the~F-complement of~$A$ and~$O(\varpi^{2e})$ denotes a matrix whose entries lie in~$\varpi^{2e} \O$.

The case of kernels, being the transpose of the previous case, is similar: given~$_rA_n$ with coefficients in $\O$ and whose reduction mod $\varpi$ has rank~$r$, we set
\[ A^+ = \left( \begin{array}{c} _rA_n \\ \hline _{n-r}S_n \end{array} \right) \]
where~$S$ is determined by similar rules so that~$A^+$ is invertible, and then if
\[ (A^+)^{-1} = \left( _nL_r \ \vert \ _{n}K_{n-r} \right) \]
then~$K$ is a kernel matrix of~$A$, which we %call \emph{the} kernel matrix of~$A$ and
denote by~$K = \Ker(A)$ with a capital K; besides we have
\begin{equation}
\Ker(A+H) = \Ker(A) - LH \Ker(A) + O(\varpi^{2e}) \label{eqn:diffker}
\end{equation}
for~$H$ of the same size as~$A$ and with coefficients in~$\varpi^e \O$.

\begin{rk}
In the next section, we will work with matrices with coefficients in~$R = \Z_q/p^{2e}$, and our perturbation matrices~$H$ will have coefficients in~$I = p^{e} R$. Since~$I^2 = 0$, all the~$O(\varpi^{2e})$ terms will vanish in~$R$.
\end{rk}

%\pagebreak

\section{$p$-adic lifting of points of~$J$}\label{sect:lift}

In this section, we suppose that we are given a matrix~$W$ with coefficients in~$\Z_q/p^e$ representing a point of~$J(\Z_q/p^e)$ (in other words, a point of~$J(\Q_q)=J(\Z_q)$ with absolute accuracy~$O(p^e)$) for some~$e \in \N$, whereas the data representing~$J$ itself have been precomputed with accuracy at least~$O(p^{2e})$ (meaning that the matrices~$K_{V_3}$,~$M_V$, etc., have coefficients in~$\Z_q/p^{e'}$ where~$e' \ge 2e$). Our goal is to find a deformation of~$W$ over~$\Z_q/p^{2e}$ which represents a point of~$J(\Z_q/p^{2e})$. Once we have solved this problem, we will be able to lift quadratically a representation of a point in~$\bar x \in J(\F_q)$ (given by a matrix for~$W_D$ with coefficients in~$\F_q$) to a representation to arbitrary~$p$-adic accuracy of a point of~$\widetilde x \in J(\Q_q)$ which reduces to~$\bar x$ mod~$p$.

Our approach consists in determining how a deformation of the input of the membership test (algorithm \ref{alg:membership}) perturbs the execution of this algorithm. Let us introduce some notation: given a commutative ring~$R$ and a column vector~$c \in  R^{n_Z}$, we will denote by~$\Delta_c$ the diagonal matrix of size~$n_Z$ and coefficients in~$R$ whose diagonal is formed of the coefficients of~$c$. Also define~$r=n_Z-d_W$.

Let~$S$ be a matrix of size~$n_Z \times d_W$ and coefficients in~$\Z_q$, whose columns~$s_i \in \Z_q^{n_Z}$ are independent mod~$p$ and form a basis of a~$\Q_q$-subspace of~$V$ of dimension~$d_W$ (this is the case in particular if~$S$ is an arbitrary lift of~$W$ still representing a subspace of~$V$, in view of the conventions established in remark~\ref{rk:padic_conventions}). An analysis of algorithm~\ref{alg:membership} shows that if we called this algorithm on~$S$, then the matrix~$W'$ introduced at line~\ref{alg:membership:critical} would be computed as the kernel of the matrix
\[ \left( \begin{array}{c} K_{V_3} \\ \hline \vdots \\ \Eqn(s_1 \odot M_V) \Delta_{s_{m}} \\ \vdots \end{array} \right) \]
where~$m$ ranges from~$1$ to~$d_W$ and where we assume without loss of generality that the~$w$ chosen at line \ref{alg:membership:w} is~$s_1$. Indeed, the block of equations~$K_{V_3}$ describes the column vectors~$c \in \Q_q^{n_Z}$ representing elements of~$V_3 = \L(3 D_0)$, whereas for each~$m$ the block of equations~$\Eqn(s_1 \odot M_V) \Delta_{s_m}$ describes the~$c$ such that~$s_m \odot c$ represents an element of~$f_1 V = f_1 \L(2D_0)$, where~$f_1$ is the function represented by~$s_1$. In particular, for~$m=1$ this block is redundant with~$K_{V_3}$ since~$D_0 \ge 0$; we therefore define
\[ K(S) = \left( \begin{array}{c} K_V \\ \hline \vdots \\ \Eqn(s_1 \odot M_V) \Delta_{s_{m}} \\ \vdots \end{array} \right) \]
where~$m$ ranges from~$2$ to~$d_W$. Algorithm \ref{alg:membership} shows that~$S$ represents an element of~$J(\Q_q)$ if and only if the~$\Q_q$-rank of~$K(S)$ is~$r$ and not more.%, and remains~$r$ after reduction mod~$p$.

Since by assumption the matrix~$W$ represents a point of~$J(\Z_q/p^e)$, the matrix~$K(W)$ is thus the reduction mod~$p^e$ of a matrix with coefficients in~$\Z_q$ and~$\Q_q$-rank~$r$.% and does not decrease after reduction mod~$p$.

Let now~$\widetilde W$ be a arbitrary lift of~$W$ to~$\Z_q/p^{2e}$ which still represents a subspace of~$V$. For instance, such a lift can be obtained by finding a matrix~$U$ with coefficients in~$\Z_q/p^e$ such that~$W = M_V U$, and taking~$\widetilde W = M_V \widetilde U$ where~$\widetilde U$ is an arbitrary lift of~$U$ to~$\Z_q/p^{2e}$. The matrix~$K(\widetilde W)$ is then a lift of~$K(W)$ to~$\Z_q/p^{2e}$, and~$\widetilde W$ represents a point~$\widetilde x \in J(\Z_q/p^{2e})$ if and only if~$K(\widetilde W)$ is the reduction mod~$p^{2e}$ of a matrix with coefficients in~$\Z_q$ and of $\Q_q$-rank~$r$ (and~$\widetilde x$ is then necessarily a lift of the point~$x \in J(\Z_q/p^e)$ represented by~$W$).

Enforcing this rank condition in terms of vanishing of minors would be inefficient; instead we use the following method.

\begin{lem}\label{lem:mat_rank}
Let~$_mM_n = \left( \begin{array}{c|c} _rA_r & _rB_{n-r} \\ \hline _{m-r}C_r & _{m-r}D_{n-r} \end{array} \right)$ be an~$m \times n$ matrix with coefficients in a field such that the~$r \times r$ block~$A$ is invertible. Then~$\rank M \ge r$, with equality iff.
\[ D-CA^{-1}B = {}_{m-r}0_{n-r}. \]
\end{lem}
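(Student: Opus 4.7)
The statement is the classical Schur complement rank formula, so the proof is short and essentially mechanical. My plan is to reduce $M$ to block-diagonal form by multiplying on the left and right by invertible block matrices, which does not change the rank, and then to read off both claims from the resulting block-diagonal matrix.

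Concretely, I would first check the inequality: since $A$ is an $r\times r$ invertible submatrix, the first $r$ columns of $M$ are already linearly independent, so $\rank M \ge r$. For the equality condition, consider the two elementary block matrices
\[ L = \begin{pmatrix} I_r & 0 \\ -CA^{-1} & I_{m-r} \end{pmatrix}, \qquad R = \begin{pmatrix} I_r & -A^{-1} B \\ 0 & I_{n-r} \end{pmatrix}, \]
which are both invertible (each is unipotent triangular). A direct block computation gives
\[ L M R = \begin{pmatrix} A & 0 \\ 0 & D-CA^{-1}B \end{pmatrix}. \]
Since $L$ and $R$ are invertible, $\rank M = \rank(LMR) = \rank A + \rank(D-CA^{-1}B) = r + \rank(D-CA^{-1}B)$.

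From this last equality both assertions of the lemma are immediate: $\rank M \ge r$ always, and $\rank M = r$ precisely when $\rank(D-CA^{-1}B)=0$, i.e.\ when $D-CA^{-1}B = {}_{m-r}0_{n-r}$. There is really no obstacle here; the only thing to verify is the block multiplication, which is a routine calculation and does not need to be written out in detail.
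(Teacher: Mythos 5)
Your proof is correct and follows essentially the same route as the paper: your matrix $R$ is exactly the paper's $P$, and the key step in both is clearing the top-right block via the unipotent factor built from $A^{-1}B$. The only difference is cosmetic — you additionally left-multiply by $L$ to clear $C$ and obtain the clean rank identity $\rank M = r + \rank(D-CA^{-1}B)$, whereas the paper stops at the lower block-triangular form $MP$ and leaves the final rank deduction implicit.
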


\begin{proof}
The matrix~$_nP_n = \left( \begin{array}{c|c} I_r & -A^{-1}B \\ \hline 0 & I_{n-r} \end{array} \right)$ is clearly invertible, and
\[ MP = \left( \begin{array}{c|c} A & 0 \\ \hline C & D-CA^{-1}B \end{array} \right). \]
\end{proof}

Let~$\overline K$ be the reduction of~$K(\widetilde W)$ mod~$p$. It has coefficients in~$\F_q$, and does not depend on the lift~$\widetilde W$ of~$W$. Since~$W$ represents a point of~$J(\Z_q/p^e)$ with~$e \ge 1$, the $\F_q$-rank of~$\overline K$ is exactly~$r$; indeed~$\overline K = K(\overline W)$, where~$\overline W$ is the reduction of~$W$ mod~$p$, which does represents a point of~$J(\F_q)$. Therefore,~$\overline K$ has an invertible minor of size~$r$. Possibly after permuting the rows and columns of~$K(\widetilde W)$, we may assume that the top-left~$r \times r$ block of~$\overline K$ is invertible over~$\F_q$; we will suppose here that this is directly the case for the clarity of exposition. Then the top left~$r \times r$ block of~$K(\widetilde W)$ is invertible over~$\Z_q/p^{2e}$. Splitting
\[ K(\widetilde W) = \left( \begin{array}{c|c} A & B \\ \hline C & D \end{array} \right) \]
as in lemma \ref{lem:mat_rank}, we have therefore established that
\begin{equation}
\widetilde W \text{ represents a point of } J(\Z_q/p^{2e}) \Longleftrightarrow D-CA^{-1}B = 0 \bmod p^{2e}. \label{eqn:lift_criterion}
\end{equation}

Furthermore, we can determine how~$K(\widetilde W)$ deforms with~$\widetilde W$, thanks to the formulas established in section \ref{sect:diff}. Indeed, let~$\widetilde w_1, \cdots, \widetilde w_{d_W}$ be the columns of~$\widetilde W$, so that
\[ K(\widetilde W) = \left( \begin{array}{c} K_V \\ \hline \vdots \\ \Eqn(\widetilde w_1 \odot M_V) \Delta_{\widetilde w_{m}} \\ \vdots \end{array} \right). \]
If we replace~$\widetilde W$ with a deformation~$\widetilde W + M_V H$ still representing a subspace of~$V$, where~$H$ is a matrix with coefficients in~$p^e \Z_q / p^{2e}$ whose columns we denote by~$h_1, \cdots, h_{d_W}$, then for each~$m$, the column~$\widetilde w_m$ gets replaced by~$\widetilde w_m + M_V h_m$, so~$\Delta_{\widetilde w_m}$ gets shifted by~$\Delta_{M_V h_m}$; whereas~$\widetilde w_1 \odot M_V = \Delta_{\widetilde w_1} M_V$ gets shifted by~$\Delta_{M_V h_1}M_V$, so that~$\Eqn(\widetilde w_1 \odot M_V)$ gets shifted by~$-\Eqn(\widetilde w_1 \odot M_V) \Delta_{M_V h_1}M_V F$ by~\eqref{eqn:diffeqn}, where~$F$ is the~F-complement of~$\Eqn(\widetilde w_1 \odot M_V)$ as defined in section~\ref{sect:diff}. Thus in total,~$K(\widetilde W)$ gets shifted by
\begin{equation} \left( \begin{array}{c} 0 \\ \hline \vdots \\ \Eqn(\widetilde w_1 \odot M_V) \Big( \Delta_{M_V h_{m}}- \Delta_{M_V h_1}M_V F \Delta_{\widetilde w_{m}} \Big) \\ \vdots \end{array} \right). \label{eqn:deform_K} \end{equation}

It is therefore easy to see how the blocks~$A$,~$B$,~$C$, and~$D$ deform in  terms of the~$h_m$, and thus to deduce linear equations that the~$h_m$ must satisfy for~$D-CA^{-1}B$ to vanish mod~$p^{2e}$.

\bigskip

These observations translate into the following algorithm:

\medskip

\begin{algorithm}[H]
\KwIn{A matrix~$W$ over~$\Z_q/p^e$ representing a point in~$J(\Z_q/p^e)$.}
\KwOut{A matrix~$\widetilde W \equiv W \bmod p^e$ over~$\Z_q/p^{2e}$ representing a lift of this point to~$J(\Z_q/p^{2e})$.}
\tcp{Lift~$W$ so that it still represents a subspace of~$V$}
$K \la$ kernel of the horizontal concatenation~$(W \vert M_V) \bmod p^{e}$\;
Drop the first~$d_W$ rows of~$K$ and lift the rest to~$\Z_q/p^{2e}$\;
$\widetilde W \la M_V K \bmod p^{2e}$\; \label{alg:lift:Wlift}
$w_1, \cdots, w_{d_W} \la$ the columns of~$\widetilde W$\;
\tcp{Save on number of variables}
$I \la$ a subset of~$\{1,\cdots,n_Z\}$ of size~$d_W$ such that the corresponding rows of~$W$ form an invertible matrix\; \label{alg:lift:I}
$V_0 \la$ a matrix representing the subspace~$\{ v \in V \ \vert \ v(P_i)=0 \ \forall i \in I \}$ of~$V$, where the~$P_i$ are as in section \ref{sect:Mak}\; \label{alg:lift:V0} %2d_0+1-g-(d_0+1-g)=d_0
$v_1, \cdots, v_{d_0} \la$ column vectors of~$V_0$\;
\tcp{Evaluate~$D-CA^{-1}B$}
$E,F \la$ equation matrix and~F-complement of~$w_1\odot M_V$\;
$K \la$ stack of~$K_V$ and of the~$E \Delta_{w_m}$ for~$2 \le m \le d_W$\tcp*{$K(\widetilde W)$}
$A,B,C,D \la$ splitting of~$K$ as in lemma~\ref{lem:mat_rank}\;
$R \la (D-CA^{-1}B)/p^e \bmod p^e$\;
\tcp{Precompute some data}
\For{$m \la 2$ \KwTo~$d_W$}{
$F_m \la -M_V F \Delta_{w_m} \bmod p^e$\;
}
\tcp{See the effect of deforming~$w_j$ by~$p^e v_i$ for each~$j \le d_W$,~$i \le d_0$}
\For{$i \la 1$ \KwTo~$d_0$}{
$E_i \la E \Delta_{v_i} \bmod p^e$\;
 \For{$j \la 1$ \KwTo~$d_W$}{
  \eIf{$j=1$}{
  $k \la$ a matrix with the same shape as~$K$, formed by stacking a block of 0 instead of~$K_V$ and the~$E_i F_m$ instead of the~$E \Delta_{w_m}$  for~$2\le m \le d_W$\;
  }
  {
  $k \la$ a matrix with the same shape as~$K$, with~$E_i$ instead of~$E \Delta_{w_j}$ and~$0$ elsewhere\;
  }
 $a,b,c,d \la$ splitting of~$k$\;
 $r_{i,j} \la d-cA^{-1}B+CA^{-1}aA^{-1}B-CA^{-1}b$ \label{alg:lift:matrices} \;
 }
}
\tcp{Solve linear system}
$H \la$ a~$d_0 \times d_W$ matrix over~$\Z_q/p^e$ such that~$R+\sum_i \sum_j h_{i,j} r_{i,j}=0$\; \label{alg:lift:syslin}
\Return~$\widetilde W + p^e V_0 H$\;
\caption{Lift}
\label{alg:lift}
\end{algorithm}

\pagebreak

\begin{proof}
We begin by finding a lift $\widetilde W$ of $W$ to $\Z_q/p^{2e}$ which still represents a subspace of $V$; this goal is attained at line~\ref{alg:lift:Wlift}.

We are then looking for a matrix~$H'$ of the same size as~$W$ and with coefficients in~$p^e \Z_q / p^{2e} \Z_q$ such that~$\widetilde W' = \widetilde W+H'$ satisfies criterion~\eqref{eqn:lift_criterion}. In particular this lift~$\widetilde W'$ must also represent a subspace of~$V$, so we actually look for~$H'$ of the form~$H' = M_V H$ for some matrix~$H$ with coefficients in~$p^e \Z_q / p^{2e} \Z_q$.

Given a matrix~$M$ with~$n_Z$ rows, write~$M_I$ for the matrix formed by the rows of~$M$ indexed by~$I$, where~$I$ is defined at line \ref{alg:lift:I}. The matrices~$\widetilde W_I$ and~$\widetilde W'_I$ are invertible over~$\Z_q/p^{2e}$ by definition of~$I$. Therefore there exists an invertible matrix~$P$ (which is congruent to the identity mod~$p^e$) such that~$\widetilde W'_I P = \widetilde W_I$. Since the matrices~$\widetilde W' P$ and~$\widetilde{W'}$ represent two bases of the \emph{same} subspace of~$V$ ($P$ being the change-of-basis matrix), we may impose that the rows of~$\widetilde W$ indexed by~$I$ remain unchanged, i.e. that~$H'$ be actually of the form~$V_0 H$ for some~$H$. This reduces the number of unknowns and thus improves the efficiency of the algorithm. Note that the number of columns of~$V_0$ is~$\dim V - \dim W = \codim_V W = d_0$.

We then construct the matrix~$K=K(\widetilde W)$, and split it into blocks~$A,B,C,D$ as explained above. In view of \eqref{eqn:lift_criterion}, we have~$D-CA^{-1}B=0$ at least mod~$p^e$, so the division defining~$R$ is exact.

Next, in the double loop we determine for each~$i$ and~$j$ how deforming~$\widetilde W$ by adding~$p^e v_i$ to its~$j$-th column perturbs~$R$ thanks to \eqref{eqn:deform_K}: in each case, we compute~$k$ such that~$K$ shifts by~$p^e k$, and deduce~$r_{i,j}$ such that~$R$ shifts by~$r_{i,j}$.

It then only remains to solve a linear system over~$\Z_q/p^e$ so as to find a linear combination of the~$r_{i,j}$ killing~$R$, and to perform the corresponding change on~$\widetilde W$.

The complexity of this algorithm is dominated by these last two steps. The matrices~$A$ and~$B$ have size~$O(g)\times O(g)$, whereas~$C$ and~$D$ have size~$O(g^2) \times O(g)$, so the matrix computations performed at line \ref{alg:lift:matrices} require  $O(g^{\omega+1})$ operations in~$\Z_q/p^{e}$. In view of remark~\ref{rk:dim_tg} below, the complexity of this algorithm is thus~$O(g^{\omega+3})$ operations in~$\Z_q/p^{O(e)}$.
\end{proof}

\begin{rk}
The matrices~$E_i$ and~$F_m$ are precomputed for efficiency reasons. Besides, multiplications by matrices of the type~$\Delta_c$ should not be performed literally but rather by rescaling the rows or columns as appropriate.
\end{rk}

\begin{rk}\label{rk:dim_tg}
The tangent space of~$J$ has dimension~$g$, and the fibers of the map
\[ \begin{array}{ccc} \Eff^{d_0}(C) & \longrightarrow & J \\ D & \longmapsto & [D-D_0] \end{array} \]
have dimension~$d_0-g$. Since we have rigidified the situation by taking~$H'$ of the form~$V_0 H$, the linear system that we solve at line \ref{alg:lift:syslin} has a solution space of dimension~$d_0=O(g)$. But the matrices $K(\widetilde W)$ and $R$ have size $O(g^2) \times O(g)$, so this system has $O(g^3)$ equations in $O(g^2)$ unknowns. It is thus very redundant, and a lot of time can be saved by solving~$O(g^2)$ random combinations of its equations and checking that we have indeed obtained a solution (e.g. thanks to algorithm~\ref{alg:membership}), instead of solving it directly. Without this, the complexity would be higher than~$O(g^{\omega+3})$.
\end{rk}

\begin{rk}
Even though the performance of our implementation is quite satisfactory (cf. section \ref{sect:examples}), it is unfortunate that the complexity of our lifting algorithm is~$O(g^{\omega+3})$ whereas Makdisi's algorithms perform arithmetic in~$J$ in only~$O(g^\omega)$. In future work, we plan to improve the complexity of our lifting algorithm, by using the asymptotically faster membership test described in Proposition/Algorithm 4.12 of~\cite{Mak2}.
\end{rk}

\section{$p$-adic lifting of torsion points }\label{sect:torslift}

Let~$m \in \N$, and let~$W$ be a~$n_Z \times d_W$ matrix with coefficients in~$\F_q$ representing an~$m$-torsion point~$x \in J(\F_q)[m]$. Thanks to algorithm \ref{alg:lift} presented in the previous section, we can find a lift~$\widetilde W$ of~$W$ representing a lift~$\widetilde x \in J(\Q_q)$ to arbitrary~$p$-adic accuracy; however this point~$\widetilde x$ will not in general be~$m$-torsion. The purpose of this section is to explain how to modify algorithm \ref{alg:lift} so that the output~$\widetilde W$ represents a point of~$J(\Q_q)$ which is still~$m$-torsion.

In this section and in the next one, whenever~$n \in \Z$ and~$W$ is a matrix representing a point~$x \in J$, we will denote by~$[n]W$ a matrix representing~$[n]x \in J$ obtained by repeated use of algorithm \ref{alg:addflip}.  

For simplicity, we will assume that~$m$ is coprime to~$p$; indeed the discussion about formal groups below shows that this ensures that the~$m$-torsion lift of~$x$ to~$J(\Q_q)$ is unique. In practice, we will take~$m =\ell$, a prime distinct from~$p$.

We actually present two methods: the first one is more efficient when the~$p$-adic accuracy is still low, whereas the second one should be used when $p$-adic accuracy becomes higher.

\subsection{Method 1: Killing the kernel of reduction}

Let~$\p = p \Z_q$ be the maximal ideal of~$\Z_q$, and for each~$e \in \N$, let us denote by~$J_e$ the kernel of the reduction map~$\xymatrix{J(\Q_q) \ar@{->>}[r] & J(\Z_q/p^e)}$. For instance~$J_1$ is the kernel of~$\xymatrix{J(\Q_q) \ar@{->>}[r] & J(\F_q)}$.

Since~$J$ has good reduction at~$p$, and since~$\Q_q$ is unramified, the theory of formal groups (cf. for instance~\cite[theorem IV.6.4.b]{Silverman}) provides us with a~$p$-adic Lie~group isomorphism
\[ \log : J_1 \overset{\sim}{\longrightarrow} \p^g \]
which maps~$J_e$ to~$(\p^e)^g$ for all~$e$ (where $(\p^e)^g$ denotes the $g$-fold cartesian product of the set $\p^e$). In particular,~$J_{e}/J_{2e} \simeq (\p^e / \p^{2e})^g$ is an Abelian group of  exponent~$p^e$. % Also Sil IV.3.2

Let now~$x \in J(\Z_q/p^e)[m]$ be an~$m$-torsion point known to accuracy~$O(p^e)$. It has a unique~$m$-torsion lift~$\widetilde x \in J(\Z_q/p^{2e})[m]$, and its other lifts are of the form~$\widetilde x' = \widetilde x + y$ with~$y \in J_e/J_{2e}$. Therefore, if~$N \in \N$ is such that~$p^e \mid N$ and that~$N \equiv 1 \bmod m$, then for any lift~$\widetilde x'$ of~$x$,~$[N] \widetilde x' = \widetilde x$ is the~$m$-torsion lift of~$x$. This leads to the following algorithm:

\medskip

\begin{algorithm}[H]
\KwIn{A matrix~$W$ over~$\F_q$ representing a point~$x \in J(\F_q)[m]$, and an integer~$e_0 \in \N$.}
\KwOut{A matrix~$\widetilde W$ over~$\Z_q/p^{e_0}$ representing the lift of~$x$ in~$J(\Z_q/p^{e_0})[m]$.}
$\widetilde W \la W$\;
$e \la 1$\;
\While{$e<e_0$}{
$\widetilde W \la$ a lift of~$\widetilde W$ to $\Z_q/p^{2e}$ \tcp*{Using algorithm \ref{alg:lift}}
$N \la$ an integer such that~$p^e \mid N$ and~$N \equiv 1 \bmod m$ \tcp*{By CRT} \label{alg:lifttors_mul_N}
$\widetilde W \la [N] \widetilde W$\;
$e \la 2e$\;
}
\Return~$\widetilde W$\;
\caption{Lifting torsion points using multiplications}
\label{alg:lifttors_mul}
\end{algorithm}

\medskip

The complexity of each iteration of the loop is~$O\big(g^{\omega+3}+g^\omega \log(m p^{e_0}) \big)$ operations in~$\Z_q/p^{O(e_0)}$, the terms coming respectively from algorithm \ref{alg:lift} and from multiplication by~$N=O(mp^{e_0})$.

\begin{rk}
We could also apply algorithm \ref{alg:lift} repeatedly so as to get a lift~$\widetilde W$ of~$W$ to precision~$e_0$, and then multiply by~$N$ where~$p^{e_0-1} \mid N$ and~$N \equiv 1 \bmod m$. However, this would be less efficient as we would perform all the multiplications at high accuracy. Anyway, it is better to use algorithm \ref{alg:lifttors_chart} for large~$e$. 
\end{rk}

\begin{rk}\label{rk:lifttors_mul_noCRT}
In practice, we often use this algorithm to lift an~$\Fl$-basis of~$T_\rho \subset J[\ell]$ from~$J(\F_q)[\ell]$ to~$J(\Z_q/p^{e_0})[\ell]$. In this case, we can afford to multiply each element of the basis by a nonzero scalar in~$\Flx$ in the process, so we can save a bit of effort by suppressing the requirement that~$N \equiv 1 \bmod \ell$ and simply taking~$N = p^e$ at line~\ref{alg:lifttors_mul_N}.
\end{rk}

\subsection{Method 2: Using a coordinate chart at the origin}

We can use algorithm \ref{alg:eval} as a coordinate chart $c' : U \longrightarrow \PP(V)$, where $U \subset J(\Z_q)$ is the preimage of a Zariski-dense subset of $J(\F_q)$. In the rest of this section, we assume that $U$ contains $0 \in J(\Z_q)$, since this is the case for ``most'' choices of divisors $E_1$ and $E_2$ used as parameters by algorithm \ref{alg:eval}. Therefore, we are able to take a lift~$\widetilde W$ of~$W$ thanks to algorithm \ref{alg:lift}, multiply the corresponding point by~$m$, and apply~$c'$ to see where in the vicinity of the origin of~$J$ the result lies. 

Ideally, we would like to compute the differential of the composition of the multiplication-by-$m$ map followed by~$c'$. Unfortunately this seems too complicated, so we settle for the following simpler approach: as explained in remark \ref{rk:dim_tg}, a point  $x \in J(\Z_q/p^e)$ possesses many different lifts $\widetilde x \in J(\Z_q/p^e)$ owing to the tangent space of $J$, so we can take sufficiently many such lifts $\widetilde x_i$, determine the coordinates of the $[m] \widetilde x_i$ thanks to~$c'$, and solve a linear system to deduce a ``combination'' $\widetilde x$ of the $\widetilde x_i$ such that $[m] \widetilde x = 0$.

This idea translates into the following algorithm:

\medskip

\begin{algorithm}[H]
\KwIn{A matrix~$W$ over~$\F_q$ representing a point~$x \in J(\F_q)[m]$, and an integer~$e_0 \in \N$.}
\KwOut{A lift~$\widetilde W$ of~$W$ over~$\Z_q/p^{e_0}$ representing the lift of~$x$ in~$J(\Q_q)[m]$.}
$\widetilde W \la W$\;
$e \la 1$\;
$c'_0 \la$ a column vector with entries in~$\Z_q/p^{e_0}$ representing~$c'(0)$\;
$j_0 \la$ a integer~$\le n_Z$ such that~$c'_0[j_0] \not \equiv 0 \bmod p$\;
$c_0 \la$ dehomogenization of~$c'_0$ wrt. the~$j_0$-th entry\; \label{alg:lifttors_chart_c0}
\While{$e<e_0$}{
\For{$i \la 1$ \KwTo~$g+1$}{ \label{alg:lifttors_mul:parallel}
$\widetilde W_i \la$ lift of~$\widetilde W$ to $\Z_q/p^{2e}$\tcp*{Obtained from algorithm \ref{alg:lift} by picking a random solution to the linear system at line~\ref{alg:lift:syslin}} \label{alg:lifttors_chart:lifts}
$x_i \la$ point of~$J(\Z_q/p^{2e})$ represented by $[m] \widetilde W_i$\;
$c'_i \la$ column vector with entries in~$\Z_q/p^{2e}$ representing~$c'(x_i)$\;
$c_i \la$ dehomogenization of~$c'_i$ wrt. the~$j_0$-th entry\;
$\kappa_i \la (c_i-c_0)/p^e \bmod p^e$\; \label{alg:lifttors_chart:ci}
}
Find~$\lambda_1, \cdots, \lambda_{g+1} \in \Z_q/p^e$ such that~$\sum_{i=1}^{g+1} \lambda_i \kappa_i = 0$ and~$\sum_{i=1}^{g+1} \lambda_i = 1$, or start over with other lifts~$\widetilde W_i$ if no such~$\lambda_i$ exist\; \label{alg:lifttors_chart:ker}
Lift the~$\lambda_i$ to~$\Z_q/p^{2e}$ so that~$\sum_i \lambda_i=1$ still\;
$\widetilde W \la \sum \lambda_i \widetilde W_{i}$\; \label{alg:lifttors_chart:Wtilde}
%\lIf{$e=1$}{
Check that the point of~$J(\Z_q/p^{2e})$ represented by~$\widetilde W$ is~$m$-torsion, and if not start over with another chart $c'$\; \label{alg:lifttors_chart_testtors}%}
$e \la 2e$\;
}
\Return~$\widetilde W$\;
\caption{Lifting torsion points using a chart}
\label{alg:lifttors_chart}
\end{algorithm}

\begin{proof}
Let us assume for now that the reduction mod $p$ of the differential of the ``chart''~$c' : U \longrightarrow \PP(V)$ at $0 \in J(\Z_q)$ has full rank $g$, which is extremely likely. Let $c : U \dashrightarrow \Q_q^{n_Z-1}$ denote $c'$ followed by the embedding $\PP(V) \hookrightarrow \PP^{n_Z}$ induced by $V \hookrightarrow \Q_q^{n_Z}$ (cf. section \ref{sect:Mak}) and then dehomogenized wrt. the~$j_0$-th coordinate. Note that by our choice of $j_0$, $c$ is defined on the points of~$J(\Z_q)$ that reduce to~$0 \in J(\F_q)$, and actually sends them to $\Z_q^{n_Z-1}$. Thus for example at line \ref{alg:lifttors_chart_c0}, we compute $c_0 = c(0) \bmod p^{e_0}$.

Suppose now that we have a matrix $W$ with coefficients in $\Z_q/p^e$ for some $e \in \N$ representing a point $ x \in J(\Z_q/p^e)[m]$. Say that a lift of $W$ to $\Z_q/p^{2e}$ is \emph{acceptable} if it represents a point of $J(\Z_q/p^{2e})$, i.e.\ if it represents a subspace of~$V$ and passes the membership test \ref{alg:membership} (but it need not be $m$-torsion), and consider the map
\[ \begin{array}{cccc} \kappa : & \big\{ \ \text{Acceptable lifts of } W \ \big\} & \longrightarrow & (\Z_q/p^e)^{n_Z-1} \\ & \widetilde W & \longmapsto & \displaystyle \frac{c(x_{[m] \widetilde W})-c_0}{p^e}. \end{array} \]
where~$x_{[m] \widetilde W} \in J(\Z_q/p^{2e})$ denotes the point represented by the matrix~$[m] \widetilde W$.
Since~$W$ represents an $m$-torsion point of $J(\Z_q/p^e)$, $x_{[m] \widetilde W}$ reduces to $0 \in J(\Z_q/p^e)$ for all $\widetilde W$, so the expression $c(x_{[m] \widetilde W})$ makes sense and is congruent to $c_0$ mod $p^e$. Thus $\kappa$ is well-defined, and can be used to measure how a lift $\widetilde W$ of $W$ fails to be $m$-torsion, in that
\begin{equation}
\text{The point of } J(\Z_q/p^{2e}) \text{ represented by } \widetilde W \text{ is } m\text{-torsion} \Longleftrightarrow \kappa(\widetilde W) = 0 \bmod p^e. \label{eqn:chart_tors}
\end{equation}

More precisely, let~$\widetilde W$ be an (unknown) fixed acceptable lift of~$W$ representing an~$m$-torsion point of $J(\Z_q/p^{2e})$, so that $\kappa(\widetilde W) = 0 \bmod p^{2e}$, and let~$V_0$ be the matrix defined at line \ref{alg:lift:V0} of algorithm \ref{alg:lift}. 
This algorithm shows that any lift of~$W$ of the form~$\widetilde W + p^e V_0 H$, where~$H$ is matrix corresponding to a vector in the kernel of the linear system solved at line \ref{alg:lift:syslin} of algorithm \ref{alg:lift}, is also an acceptable lift; and for such a lift, we have~$\kappa(\widetilde W + p^e V_0 H) = \kappa(\widetilde W) + T(V_0 H) = T(V_0 H)$, where~$T$ is the differential of $\kappa$, which is the reduction mod $p^e$ of a $\Z_q$-linear map of rank~$g$ by our assumption on~$c'$.

Besides, the lifts constructed by algorithm \ref{alg:lift} all have this form, so the $g+1$ lifts~$\widetilde W_i$ considered at line \ref{alg:lifttors_chart:lifts} of algorithm \ref{alg:lifttors_chart} may be written as $\widetilde W_i = \widetilde W + p^e V_0 H_i$. The~$g+1$ vectors~$\kappa_i = \kappa(\widetilde W_i) = T(V_0 H_i)$ found at line \ref{alg:lifttors_chart:ci} are necessarily linearly dependent, so at line \ref{alg:lifttors_chart:ker} we can find scalars~$\lambda_i \in \Z_q/p^e$ such that~$\sum_{i=1}^{g+1} \lambda_i \kappa_i = 0 \bmod p^e$ non-trivially. This relation can be rewritten as~$\sum_{i=1}^{g} \lambda_i (\kappa_i-\kappa_{g+1}) + \left( \sum_{i=1}^{g+1} \lambda_i \right) \kappa_{g+1} = 0$, so assuming that the $g$ vectors $(\kappa_i-\kappa_{g+1})$ are linearly independent mod $p$ (which happens with high probability, else we start over), $\sum_{i=1}^{g+1} \lambda_i$ cannot be zero mod $p$; we can therefore rescale the $\lambda_i$ so that~$\sum_{i=1}^{g+1} \lambda_i = 1$. Define then
\begin{align*} \widetilde W' &= \sum_{i=1}^{g+1} \lambda_i \widetilde W_i \\ & = \widetilde W + \sum_{i=1}^{g+1} \lambda_i (\widetilde W_i - \widetilde W) \\ &= \widetilde W + p^e V_0 \sum_{i=1}^{g+1} \lambda_i H_i \end{align*}
as at line \ref{alg:lifttors_chart:Wtilde}. This is an acceptable lift of~$W$, which satisfies
\[ \kappa(\widetilde W') = T \left(V_0 \sum_{i=1}^{g+1} \lambda_i H_i \right) = \sum_{i=1}^{g+1} \lambda_i \kappa_i = 0 \bmod p^e;\]
it is therefore~$m$-torsion by \eqref{eqn:chart_tors}.

Now in all generality, as noted in remark \ref{rk:chart}, we cannot be completely certain that the reduction mod $p$ of the differential of $c'$ at~$0$ has full rank $g$. If it is does not, then $c'$ will not be a local immersion, so \eqref{eqn:chart_tors} is only a necessary but not sufficient condition. This nasty case does happen in practice, albeit very rarely, whence the test at line \ref{alg:lifttors_chart_testtors}. In practice, this situation is revealed by the the fact that the vectors~$(\lambda_i)_{i \le g+1}$ satisfying~$\sum_i \lambda_i \kappa_i = 0$ at line \ref{alg:lifttors_chart:ker} form a space of dimension more than 1, so that it is sufficient to execute this test only at the first iteration of the \textbf{while} loop.

The complexity of each iteration of this loop is~$O( g^{\omega+3} + g^{\omega+1} \log m)$ operations in~$\Z_q/p^{O(e_0)}$.
\end{proof}

\begin{rk}
One should not be surprised that the (a priori distinct) matrices $\widetilde W$ and $\widetilde W'$ both represent the unique $m$-torsion lift of $x$ to $J(\Z_q/p^{2e})$, cf. remark \ref{rk:dim_tg}.
\end{rk}

\begin{rk}
Of course, the computation of the $g+1$ lifts $\widetilde W_i$ at line \ref{alg:lifttors_chart:lifts} should not be done by calling algorithm \ref{alg:lift} $g+1$ times, but by modifying it so that it returns $g+1$ random acceptable lifts.
\end{rk}

\begin{rk}\label{rk:lifttors_mix}
When lifting from~$J(\Z_q/p^e)[m]$ to~$J(\Z_q/p^{2e})[m]$, algorithm~\ref{alg:lifttors_chart} performs~$O\big(g^{\omega+3} + g^\omega \log(mp^e) \big)$ operations in~$\Z_q/p^{O(e)}$, whereas algorithm \ref{alg:lifttors_mul} performs~$O(g^{\omega+3} + g^{\omega+1} \log m)$. In order to lift a point from~$J(\F_q)[m]$ to~$J(\Q_q)[m]$, it is therefore reasonable to start with the method of algorithm \ref{alg:lifttors_mul} for low values of~$e$, and to switch to the method of algorithm \ref{alg:lifttors_chart} when~$e$ exceeds~$g \log m / \log p$. If we use fast arithmetic, this results in a complexity of~$\widetilde O(e a \log p) O(g^{\omega+3} + g^{\omega+1} \log m)$ bit operations to lift a point of~$J(\F_q)[m]$ to~$J(\Z_q/p^e)[m]$, where~$q = p^a$. Note however that the loop at line \ref{alg:lifttors_mul:parallel} of algorithm \ref{alg:lifttors_chart} can very well be executed in parallel.
\end{rk}

\section{Application to the computation of Galois representations}\label{sect:galrep}

We now finally come back to the initial problem, that of computing the Galois representation~$\rho$ afforded by the subspace~$T_\rho \subset J[\ell]$. Recall that we denote by~$\Phi$ the Frobenius at~$p$, and that we assume that we know the characteristic polynomial~$\chi_\rho(x) \in \Fl[x]$ of~$\rho(\Phi)$, as well as~$L_p(x) \in \Z[x]$, that of~$\Phi$ acting on~$J$. Also recall that we assume that~$\chi_\rho(x)$ and its cofactor~$L_p(x)/\chi_\rho(x)$ are coprime in~$\Fl[x]$.

\subsection{Splitting the representation}

The first thing to do is to determine~$q=p^a$ so that the points of~$T_\rho$ are defined over~$J(\Q_q)$. The smallest such~$a$ is of course the multiplicative order of~$\rho(\Phi)$, which can be bounded thanks to the following result:

%\begin{lem}
%Let~$p \in \N$ be a prime number, and let~$a,b \in \N$ be integers. Then
%\begin{enumerate}
%\item~$\gcd(p^a-1,p^b-1) = p^{\gcd(a,b)}-1$,
%\item~$p^a-1 \mid p^b-1 \Iff a \mid b$,
%% WRONG! \item~$\lcm(p^a-1,p^b-1) = p^{\lcm(a,b)}-1$.
%\end{enumerate}
%\end{lem}
%
%\begin{proof}
%\begin{enumerate}
%\item Let~$a=bq+r$ be the Euclidean division of~$a$ by~$b$, so that~$0 \leqslant r < b$. Then we have
%\[p^a-1 = p^bq p^r - p^r + p^r -1 = P p^r (p^b-1) + (p^r-1) \]
%where~$P = \frac{p^{bq}-1}{p^b-1}$ is an integer and~$0 \leqslant p^r-1 < p^b-1$, which shows that the remainder of the Euclidean division of~$p^a-1$ by~$p^b-1$ is~$p^r-1$. Euclid's algorithm to compute the gcd thus ``commutes'' with the map~$x \mapsto p^x-1$, which proves the first point.
%\item The second point is then proven simply by noticing that
%\begin{align*}
%p^a -1 \mid p^b-1 & \Iff \gcd(p^a-1,p^b-1)=p^a-1 \\
%& \Iff p^{\gcd(a,b)}-1 = p^a-1 \text{ by the first point} \\
%& \Iff \gcd(a,b) = a \\
%& \Iff a \mid b.
%\end{align*}
%\item The third point is an obvious consequence of the second one.
%\end{enumerate}
%\end{proof}

\begin{pro}\label{prop:mordroot}
Suppose~$\chi_\rho(x)$ factors in~$\Fl[x]$ as~$\prod_i \chi_i(x)^{e_i}$, where the~$\chi_i(x) \in \Fl[x]$ are pairwise distinct irreducible polynomials of respective degrees~$d_i$, and the~$e_i$ are positive integers. Let~$a_i$ be the multiplicative order of the class of~$x$ in~$\Fl[x]/\chi_i(x)$. Then the order of~$\rho(\Phi)$ is of the form
\[ a = \ell^u \lcm(a_i) \]
for some integer~$u \le \max_i \lceil \frac{\log e_i}{\log \ell} \rceil$, where~$\lceil \cdot \rceil$ denotes rounding from above to the nearest integer.
\end{pro}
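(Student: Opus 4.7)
The plan is to reduce to the primary-cyclic pieces of $\rho(\Phi)$ and then analyse the order of the companion class of $x$ in the local rings $\F_\ell[x]/\chi_i^{f_i}$. First, pick a factorization $\chi_\rho(x) = \prod_i \chi_i(x)^{e_i}$ with the $\chi_i$ distinct and irreducible. By the structure theorem for $\F_\ell[x]$-modules (equivalently, primary decomposition applied to the action of $\rho(\Phi)$ on $\F_\ell^d$), we obtain a $\rho(\Phi)$-stable decomposition $\F_\ell^d = \bigoplus_i V_i$, where $V_i$ is the $\chi_i$-primary component. The minimal polynomial of $\rho(\Phi)_{\vert V_i}$ is then a power $\chi_i(x)^{f_i}$ with $1 \le f_i \le e_i$, and the order $a$ of $\rho(\Phi)$ is $\lcm_i a^{(i)}$, where $a^{(i)}$ denotes the order of $\rho(\Phi)_{\vert V_i}$.

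Next, I identify $a^{(i)}$ with the multiplicative order of $x$ in the local ring $R_i = \F_\ell[x]/\chi_i(x)^{f_i}$. Indeed, $\rho(\Phi)_{\vert V_i}^m = \mathrm{id}$ if and only if $\chi_i^{f_i} \mid x^m - 1$, since $\chi_i^{f_i}$ is the minimal polynomial; this is precisely the condition $x^m = 1$ in $R_i^\times$. The ring $R_i$ is local with residue field $\F_\ell[x]/\chi_i \cong \F_{\ell^{d_i}}$ and maximal ideal $\mathfrak{m}_i = (\chi_i)$, whose cardinality $\ell^{d_i(f_i-1)}$ is a power of $\ell$. Reducing modulo $\mathfrak{m}_i$ sends $x$ to a generator of order $a_i$ of the residue field's multiplicative group, so $x^{a_i} \in 1 + \mathfrak{m}_i$; as $1 + \mathfrak{m}_i$ is an $\ell$-group, the order of $x^{a_i}$ in $R_i^\times$ is a power $\ell^{u_i}$ of $\ell$, and hence $a^{(i)} = a_i \ell^{u_i}$.

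To bound $u_i$, I use the Frobenius trick in characteristic $\ell$: write $x^{a_i} = 1 + \chi_i y$ for some $y \in R_i$, then
\[ x^{a_i \ell^k} = (1+\chi_i y)^{\ell^k} = 1 + \chi_i^{\ell^k} y^{\ell^k} \quad \text{in } R_i. \]
As soon as $\ell^k \ge f_i$, the class of $\chi_i^{\ell^k}$ vanishes in $R_i$, so $x^{a_i \ell^k} = 1$. Hence $u_i \le \lceil \log_\ell f_i \rceil \le \lceil \log_\ell e_i \rceil$.

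Finally, I combine these pieces. Since each $a_i$ divides $\ell^{d_i}-1$, it is coprime to $\ell$, and therefore
\[ a = \lcm_i \bigl(a_i \ell^{u_i}\bigr) = \ell^u \cdot \lcm_i(a_i), \qquad u = \max_i u_i \le \max_i \left\lceil \frac{\log e_i}{\log \ell} \right\rceil, \]
as required. The only slightly delicate step is the identification of $a^{(i)}$ with the order of $x$ in $R_i^\times$ when the minimal polynomial exponent $f_i$ is strictly less than $e_i$, but working with the minimal polynomial (rather than the characteristic polynomial) on each primary block makes this immediate, and the bound $\lceil \log_\ell e_i \rceil$ then follows \emph{a fortiori} since $f_i \le e_i$.
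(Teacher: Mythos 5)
Your proof is correct, and it follows the paper's overall strategy: split $\F_\ell^d$ into the $\chi_i$-primary components, observe that each $a_i$ is prime to $\ell$, and extract the $\ell$-power factor on each block using the characteristic-$\ell$ identity $(1+z)^{\ell^k}=1+z^{\ell^k}$. Where you genuinely diverge is the per-block analysis: the paper argues with matrices (triangularise the restriction $\phi_i$ over $\overline{\F_\ell}$ to see that $\phi_i^{n}$ is unipotent iff $a_i \mid n$, write $\phi_i^{a_i}=1+N_i$ with $N_i$ nilpotent, and bound its nilpotency index), whereas you transfer the question to the multiplicative order of $x$ in the local ring $R_i=\F_\ell[x]/\chi_i(x)^{f_i}$ cut out by the \emph{minimal} polynomial of $\phi_i$, using the residue field $\F_{\ell^{d_i}}$ to produce the prime-to-$\ell$ part $a_i$ and the $\ell$-group $1+\mathfrak{m}_i$ for the rest. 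This buys you two small advantages: the bound comes out with $f_i$ in place of $e_i$ (marginally sharper, and a fortiori sufficient), and the vanishing of $\chi_i^{\ell^k}$ in $R_i$ as soon as $\ell^k\ge f_i$ is immediate, so you never need to control the nilpotency index of $N_i$ over the algebraic closure --- a point the paper's write-up treats somewhat loosely. Two cosmetic remarks: ``generator of order $a_i$'' should read ``element of order $a_i$'' (its image need not generate the full residue multiplicative group), and you may note in passing that $x$ is a unit in $R_i$ because $\chi_i \neq x$, as $\rho(\Phi)$ is invertible; also the step ``hence $a^{(i)}=a_i\ell^{u_i}$'' silently uses that $a_i$ is prime to $\ell$, which you only invoke later, so it is worth moving that observation up.
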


\begin{proof}
Let~$\phi = \rho(\Phi)$. Since the~$\chi_i(x)^{e_i}$ are pairwise coprime, the~$\Fl[\phi]$-module~$\overline{T}_\rho$ decomposes as a direct sum~$\bigoplus_i M_i$ of ``generalised eigenspaces''~$M_i = \ker \chi_i(\phi)^{e_i}$. The order of~$\phi$ is thus the lcm of the order of the~$\phi_i$, where~$\phi_i = \phi_{\vert M_i}$. As~$a_i$ divides~$\ell^{d_i}-1$ for all~$i$, the~$a_i$ are coprime to~$\ell$, so it is enough to show that for all~$i$, the order of~$\phi_i$ is of the form~$\ell^{u_i} a_i$ for some integer~$u_i \le \lceil \frac{\log e_i}{\log \ell} \rceil$.  

The characteristic polynomial of~$\phi_i$ is~$\chi_i(x)^{e_i}$, so its eigenvalues in~$\overline \Fl$ are the roots of~$\chi_i(x)$, and are thus of multiplicative order~$a_i$. Triangularising~$\phi_i$ over~$\overline \Fl$ thus shows that~$\phi_i^n$ is unipotent iff.~$a_i$ divides~$n$. In particular,~$a_i$ divides the order of~$\phi_i$.

Write~$\phi_i^{a_i}=1+N_i$ with~$N_i$ nilpotent. The characteristic polynomial of~$N_i$ is~$x^{d_i}$, so~$N^{d_i}=0$ by Cayley-Hamilton. Induction on~$n$ reveals that~$(1+N)^{\ell^n} = 1 + N^{\ell^n}$ for all~$n \in \N$. So if~$U_i = \lceil \frac{\log e_i}{\log \ell} \rceil$, so that~$\ell^{U_i} \ge d_i$, then we have~$\phi_i^{\ell^{U_i} a_i} = (1+N)^{\ell^{U_i}} = 1$, which shows that the order of~$\phi_i$ divides~$\ell^{U_i} a_i$. Since this order is also divisible by~$a_i$, the result follows. 
\end{proof}

\begin{rk}
It is clear that the bound~$u \le \max_i \lceil \frac{\log e_i}{\log \ell} \rceil$ is optimal.
\end{rk}

In order to determine~$a_i$, we simply have to find the primes~$r$ dividing~$n_i = \ell^{d_i}-1$, and to test for each such~$r$ whether~$x^{n_i/r}$ is~$1$ in~$\Fl[x]/\chi_i(x)$.
Fortunately, in practice~$\ell^{d_i}-1$ is not very large, so finding all these primes is not difficult.

In particular, in the case when~$\chi_\rho(x)$ is squarefree, that is to say when~$e_i=1$ for all~$i$, then~$u=0$, so we find that the order of~$\rho(\Phi)$ is exactly~$a=\lcm(a_i)$.

\begin{rk}\label{rk:choose_p}
If~$\chi_\rho(x)$ is not squarefree, then we obtain an upper bound on~$a$ instead. The exact value of~$u$, and thus of~$a$, can then be determined by experimenting, but it is better to avoid this case by changing~$p$ if possible.

In fact, if we know (or can afford to determine)~$\chi_\rho(x)$ for not just one but several primes~$p$, it is advisable for the efficiency of the rest of the computation to choose~$p$ so that~$a$ is minimal; cf. the examples in section \ref{sect:examples}.
\end{rk}

\subsection{Computing a basis of~$T_\rho$ mod~$p$}

Now that we have chosen~$q=p^a$, we can let~$\overline {T_\rho} \subset J(\F_q)[\ell]$ be the reduction of~$T_\rho$ mod~$p$. The next step in the computation of~$\rho$ consists in finding~$\dim \overline{T_\rho}$ matrices~$W$ over~$\F_q$ representing an~$\Fl$-basis of~$\overline{T_\rho}$. Since~$\dim \overline{T_\rho}$ is known (for instance it is the degree of~$\chi_\rho(x)$), our strategy consists in generating elements of~$\overline{T_\rho}$ at random until we obtain a basis, which can be confirmed by algorithm \ref{alg:FR}.

Let~$N = \# J(\F_q)$, which can be computed explicitly as
\[ N = \Res\big(L_p(x),x^a-1\big). \]
Let us factor
\[ N = \ell^v M \]
where~$M \in \N$ is coprime to~$\ell$; we also define
\[ \psi(x) = L_p(x) / \chi_\rho(x) \in \Fl[x]. \]

A simple-minded way to generate elements of~$\overline{T_\rho}$ proceeds as follows:

\medskip

\begin{algorithm}[H]
$x \la$ a random point of~$J(\F_q)$\; 
$x \la [M] x$ \tcp*{Project onto~$J[\ell^\infty]$} \label{Vbad_M}
\While{$[\ell]x \neq 0$}{ \label{Vbad_loop} 
$x \la [\ell] x$ \tcp*{Project onto~$J[\ell]$}
}
$x \la \psi(\Phi) x$ \tcp*{Project onto~$\overline{T_\rho}$} \label{Vbad_psi}
\Return~$x$\;
\caption{An unbalanced way to generate points of~$\overline{T_\rho}$}
\label{alg:Vbad}
\end{algorithm}

\medskip

Since~$N = O(q^g)$ by the Weil bounds, the complexity of this algorithm is~$O\big( (g \log q + a \log \ell) g^\omega \big)$ operations in~$\F_q$.

\bigskip

Unfortunately, the points obtained this way are usually far from being equidistributed. For instance, suppose that~$J(\F_q)[\ell^\infty]$ is of the form~$\Z/\ell^2 \Z \times \Z/\ell \Z$; then with high probability, after line \ref{Vbad_M},~$x$ has the form~$(u,v)$ with~$u$ of order~$\ell^2$ and~$v$ of order~$\ell$, so that the loop at line \ref{Vbad_loop} turns it into~$(\ell u,0)$, so we almost always get points of~$J(\F_q)[\ell] \simeq \Z/\ell \Z \times \Z/\ell \Z$ whose second component is zero.

We can try to circumvent this issue by performing line \ref{Vbad_psi} before the loop at line \ref{Vbad_loop}, but this does not help in case~$\overline{T_\rho}$ itself has~$\ell$-power torsion; besides,~$\psi(x)$, which is computed mod~$\ell$ only, will not be the correct cofactor, so the point we get in the end will not even lie in~$\overline{T_\rho}$ in general!

\bigskip

In order to remedy this issue,~\cite[section 3.9]{Bruin} suggests using the Kummer map
\[ \begin{array}{ccl} J & \longrightarrow & J[\ell] \\ x & \longmapsto & y^\Phi - y, \text{ where } [\ell]y=x. \end{array} \]
Bruin shows that this process leads to uniform distribution in~$J[\ell]$. However, this method requires enlarging the field~$\F_q$, which considerably slows dow the computations, especially the multiplication-by-$M$ step (note that~$M$ is of the order of magnitude of~$q^g$).

\bigskip

We propose instead to modify algorithm \ref{alg:Vbad} by including a process resembling Gaussian elimination. To demonstrate our idea, suppose again that~$J(\F_q)[\ell^\infty] \simeq \Z/\ell^2 \Z \times \Z/\ell \Z$, and take~$T_\rho = J[\ell]$ to simplify. If we start with two random points~$x_1, x_2 \in J$ and we apply algorithm \ref{alg:Vbad}, after line \ref{Vbad_M} these points become~$x_1 = (u_1,v_1)$ and~$x_2 = (u_2,v_2) \in J[\ell^\infty]$, so unless one of the~$u_i$ is 0 mod~$\ell$, line \ref{Vbad_loop} leads us to~$y_1=(\ell u_1,0)$ and~$y_2=(\ell u_2,0)$, which fail to form a basis of~$J[\ell]$. Nevertheless, we can turn this failure to our advantage! Indeed, thanks to algorithm \ref{alg:FR} we can find~$\lambda_1, \lambda_2$ not both 0 such that~$\lambda_1 y_1 + \lambda_2 y_2 = 0$, so dividing this relation by~$\ell$ yields a new~$\ell$-torsion point~$x_3 =  \lambda_1 x_1 + \lambda_2 x_2$, whose second component is nonzero with high probability.

\begin{rk}
Variants of this Gaussian elimination strategy appear in several places of the literature, such as~\cite{Gaussian_elim}.
\end{rk}

This idea leads to the following algorithm, which performs very well in practice:

\medskip

\begin{algorithm}[H]
\KwIn{$\#J(\F_q)=\ell^v M$,~$L_p(x) \in \Z[x]$, and~$\chi_\rho(x) \in \Fl[x]$.}
\KwOut{An~$\Fl$-basis of~$\overline{T_\rho}$.}
$\widetilde \psi(x) \la$ Hensel lift to precision~$O(\ell^v)$ of~$L_p(x)/\chi_\rho(x) \in \Fl[x]$\; \label{alg:Vbasis:Hensel}
$r \la 0$\;
$d \la \deg \chi_\rho(x)$\tcp*{$d = \dim \overline{T_\rho}$}
\While{$r < d$}{
$r \la r+1$\;
 \Repeat{$y \neq 0$}{
 $x \la$ a random point of~$J(\F_q)$\; \label{alg:Vbasis:x}
 $x \la [M] \widetilde \psi(\Phi) x$\;
 $y \la x$\; \label{alg:Vbasis:y}
 $o \la 0$\;
  \While{$[\ell]y \neq 0$}{ 
  $y \la [\ell] y$\;
  $o \la o+1$\;
  }
 }
$x_r \la x$\;
$y_r \la y$\;
$o_r \la o$ \label{alg:Vbasis:or} \tcp*{New point~$y_r = \ell^{o_r} x_r \in \overline{T_\rho}$, can we keep it?}
 \If{$r>1$}{
 $n \la r$\;
 $z_1, \cdots, z_n\la$ random elements of~$J(\F_q)$\; \label{alg:Vbasis:Zi}
  %\tcp{Using algorithm \ref{alg:FR}}
 $P \la$ matrix of size~$n \times r$ with coefficients~$P_{i,j} = [ y_j, z_i ]_\ell \in \Fl$\;
 $R \la \Ker P$\;
  \If{$\dim R \ge 2$}{
  $n \la n+1$\;
   \Goto \ref{alg:Vbasis:Zi}\;
  }
  \If{$\dim R = 1$}{
  $(\lambda_1,\cdots,\lambda_r) \la$ a generator of~$R$\;
   \tcp{Is this an actual relation?}
   \If{$\sum_{i \le r} [\lambda_i] y_i = 0$}{ \label{alg:Vbasis:testrel}
    $I \la \{ i \, \vert \, \lambda_i \neq 0 \}$\;
    $\mu \la \min_{i \in I} o_i$\;
    \uIf{$\mu>1$}{
    $x \la \sum_{i \in I} [\lambda_i \ell^{o_i-\mu}] x_i$ \tcp*{Divide the relation}
     \Goto \ref{alg:Vbasis:y}\;
    }
    \Else{
     $r \la r-1$ \tcp*{If cannot divide, give up this point} \label{alg:Vbasis:giveup}
    }
   }
  }
 }
}
\Return~$y_1, \cdots, y_d$\;
\caption{Finding a basis of~$\overline{T_\rho}$}
\label{alg:Vbasis}
\end{algorithm}

\pagebreak

We now show the correctness of this algorithm and discuss its efficiency. Let us begin by introducing the main ideas.

\bigskip

\mypara \label{par:randpt} First of all, we note that this algorithm needs to pick random points of~$J(\F_q)$ at line \ref{alg:Vbasis:x}, and also at line \ref{alg:Vbasis:Zi}. Sophisticated methods to achieve this uniformly are presented in~\cite{Bruin}, but we content ourselves with a much cruder approach: we pick a random subset~$S$ of~$\{1, \cdots, n_Z\}$ of cardinality~$d_0$, and compute by linear algebra the subspace~$W$ of~$V$ formed of vectors whose~$i$-th coordinate vanishes for each~$i \in S$. Indeed, this subspace represents the point~$\left[ \sum_{i \in S} P_i - D_0 \right]$ of~$J$, \linebreak where the~$P_i$ are as in section \ref{sect:Mak}. Since these~$P_i$ have been chosen at random, this approach performs very well in practice. In order to analyse the performance of algorithm \ref{alg:Vbasis}, we will assume that the points thus obtained are uniformly distributed in~$J(\F_q)$.

\smallskip

\mypara \label{par:proj} Let~$L_p(x) = \widetilde \chi_\rho(x) \widetilde \psi(x)$ be the~$\ell$-adic lift to accuracy~$O(\ell^v)$ of the coprime factorisation~$L_p(x) \equiv \chi_\rho(x) \psi(x) \bmod \ell$, and let~$\Lambda = J(\F_q)[\ell^v] \leqslant J(\F_q)$ \linebreak and~$\Gamma=  J(\F_q)[\ell^v,\widetilde \chi_\rho(\Phi)] = \Lambda[\widetilde \chi_\rho(\Phi)] \leqslant \Lambda$. In view of the Chinese remainder isomorphisms
\[ J(\F_q) \simeq J(\F_q)[\ell^v] \times J(\F_q)[M] = \Lambda \times J(\F_q)[M] \]
and
\[ \Lambda \simeq \Lambda[\chi_\rho(\Phi)] \times \Lambda[\psi(\Phi)] = \Gamma \times \Lambda[\psi(\Phi)], \]
if~$g_1,\cdots,g_r \in J(\F_q)$ are uniformly chosen random points as in \ref{par:randpt}, then the points~$h_j=[M] \widetilde \psi(\Phi) g_j$ are uniformly distributed in~$\Gamma$.

\smallskip

\mypara \label{par:prob_gen} Observe now that~$\Gamma$ is a finite Abelian~$\ell$-group, whose~$\ell$-torsion subgroup~$\Gamma[\ell]$ is precisely the space~$\overline{T_\rho}$ that we are trying to generate. It is clear that sufficiently many elements~$h_i$ of~$\Gamma$ generated as in \ref{par:proj} will form a generating set of~$\Gamma$ with high probability; indeed, if~$H= \langle h_1, \cdots, h_k \rangle$ is a strict subgroup of~$\Gamma$, then it has index at least~$\ell$, so~$h_{k+1} \not \in H$ with probability at least~$1-\frac1\ell$.

\smallskip

\mypara \label{par:Gauss} Suppose thus that we have elements~$h_j \in \Gamma$ which form a generating set. We can determine the order of each~$h_j$ by repeatedly multiplying then by~$\ell$ until the result is~$0$. Let~$n \in \N$ be such that the largest of these orders is~$\ell^n$, so that the exponent of~$\Gamma$ is~$\ell^n$, and consider the projection morphism
\[ \pi = [\ell^{n-1}] : \Gamma \longrightarrow \Gamma[\ell] = \overline{T_\rho}. \]
Let~$P$ be the matrix such that~$P_{i,j} = [\pi(h_j),z_i]_\ell \in \Fl$, where the~$z_i$ are sufficiently many random elements of~$J(\F_q)$ generated as in \ref{par:randpt}, and let~$R$ be a matrix whose columns form an~$\Fl$-basis of~$\Ker P$. Then the columns of~$R$ span a space containing all the~$\Fl$-linear relations satisfied by the~$\pi(h_j)$. Besides, for the same reason as in~\ref{par:prob_gen}, the linear forms~$y \mapsto [y,z_i]_\ell$ generate~$\Hom_{\Fl}(\pi(\Gamma),\Fl)$ with high probability; we can check that this indeed the case by testing whether~$\sum_j R_{j,k} \pi(h_j) = 0$ for each~$k$. If this is not the case, we generate more~$z_i$, and try again; else the~$h'_k = \sum_j \widetilde{R_{j,k}} h_j$ form a generating set of~$\Ker \pi = \Gamma[\ell^{n-1}]$, where~$\widetilde{R_{j,k}}$ denotes an arbitrary lift to~$\Z$ of~$R_{j,k} \in \Fl$.

By iterating the process with~$\Gamma[\ell^{n-1}]$ instead of~$\Gamma$ and the~$h'_k$ instead of the~$h_j$, we thus get generating sets for~$\Gamma[\ell^{n-m}]$ for~$m=1,2,\dots$ until we get a generating set for~$\Gamma[\ell^1] = \overline{T_\rho}$.

\pagebreak

\mypara Let now~$h_j \in \Gamma$ be sufficiently many elements, so that they probably generate~$\Gamma$ although we are not sure of that. By applying the method \ref{par:Gauss} to the~$h_j$ (which amounts to working in the subgroup generated by the~$h_j$ instead of~$\Gamma$), either we find a generating set of~$\overline{T_\rho}$, in which case we have reached our goal, so we stop; or we do not, in which case we conclude from \ref{par:Gauss} that our~$h_j$ did not actually generate~$\Gamma$, so we generate a few more of them as in \ref{par:proj} and we try again.

\bigskip

These arguments show that this method quickly leads us to a generating set of~$\overline{T_\rho}$ with high probability. Algorithm \ref{alg:Vbasis} is derived from these arguments, with a few extra optimizations. Namely, given one~$h_j \in \Gamma$, instead of blindly computing~$\pi(h_j) = [\ell^{n-1} h_j]$, which is always~$\ell$-torsion but could be 0, we compute~$[\ell^o] h_j$ where~$o$ is the largest integer such that the result is non-zero; this provides us with more non-zero elements of~$\overline{T_\rho}$, which help to obtain a generating set sooner. Besides, by introducing the~$z_i$ one by one, and by throwing away the ones that yield a linear form which is in the span of the forms~$[ \cdot, z_i]_\ell$ corresponding to the previous~$z_i$, we ensure that these forms remain linearly independent.

In detail, at line \ref{alg:Vbasis:x}, we have obtained~$r-1$ linearly independent points~$y_1, \cdots, y_{r-1}$ of~$\overline{T_\rho}$, with~$y_i = \ell^{o_i} x_i$ and~$x_i \in \Gamma$. After line \ref{alg:Vbasis:or}, we have obtained a new nonzero point~$y_r = \ell^{o_r} x_r \in \overline{T_\rho}$; however this point may be linearly dependent with~$y_1, \cdots, y_{r-1}$, except of course if~$r=1$. In order to determine whether this is the case, we examine these points through the linear forms~$[ \cdot, z_i ]_\ell$  for~$n$ random points~$z_i \in J$, where initially~$n=r$. The space~$R$ we obtain contains the space~$R'$ of linear relations actually satisfied by~$y_1, \cdots y_r$, but may be larger. Since~$y_1, \cdots, y_{r-1}$ are linearly independent,~$R'$ has dimension at most 1; so if~$\dim R \ge 2$ then~$R \supsetneq R'$, so we increase the number~$n$ of linear forms and start our search for relations over. \linebreak If~$\dim R = 1$, then either this candidate relation is a false positive, so the condition at line \ref{alg:Vbasis:testrel} is not satisfied, and~$y_1, \cdots y_r$ are independent so can include~$y_r$ in our partial basis of~$\overline{T_\rho}$; or the candidate relation does hold, but then we can still try to get a new~$\ell$-power torsion point by dividing the relation~$R$ by a power of~$\ell$. We take the result of this division as a new~$x$ and start the whole process over, except if we cannot divide~$R$ by~$\ell$ because one of the~$o_i$ is too small, in which case we give up the current~$x$ and~$y$ at line \ref{alg:Vbasis:giveup} and start over with a new random~$x \in \Gamma$.

\bigskip

The complexity of this algorithm is~$O\big( (g \log q + (a+d) \log \ell) d g^\omega \big)$ where~$d=\dim \rho$, assuming we have to give up the current~$x$ a bounded number of times, and neglecting the cost of discrete logarithms in~$\mu_\ell$ and of linear algebra over~$\Fl$.

\begin{rk} We can further improve the efficiency of the algorithm if, every time a new~$y_r$ has been verified to be linearly independent of~$y_1, \cdots, y_{r-1}$, we repeatedly apply the Frobenius~$\Phi$ to~$y_r$ and add the corresponding points to our partial basis of~$\overline{T_\rho}$ as long as no linear dependence is detected. Indeed, the application of~$\Phi$ by algorithm \ref{alg:Frob} is very fast. This idea is very similar to that presented in section \ref{sect:use_Frob} below.
\end{rk}

\begin{rk}
In order to be able to use the ``linearized'' version of the Frey-R\"uck pairing, we need~$\F_q$ to contain the~$\ell$-th roots of unity (else this pairing would be trivial anyway). This is often the case, since the determinant of the representation~$\rho$ usually involves the mod~$\ell$ cyclotomic character due to the existence of the Weil pairing. In any case we can always extend~$\F_q$, but this would considerably slow down all the computations.
\end{rk}

\subsection{The rest of the computation}

Now that we have obtained an~$\Fl$-basis of~$\overline{T_\rho} \subset J(\F_q)[\ell]$ by algorithm \ref{alg:Vbasis}, we first fix an~$e \in \N$, and we lift our basis into a basis of~$T_\rho \subset J(\Q_q)[\ell]$ to accuracy~$O(p^e)$ by a combination of algorithms \ref{alg:lifttors_mul} and \ref{alg:lifttors_chart} as explained in remarks \ref{rk:lifttors_mix} and \ref{rk:lifttors_mul_noCRT}. We then use algorithm \ref{alg:addflip} to compute all the points of~$T_\rho$, and we evaluate these points by algorithm \ref{alg:eval}, still at precision~$O(p^e)$. All these computations can be massively parallelized, and their complexity is~$\widetilde O(e a \log p) O(g^{\omega+3}+g^{\omega+1} \log \ell + g^\omega \ell^d)$ bit operations.

Finally, we form the monic polynomial~$F(x)$ whose roots are the values in~$\Q_q$ that we have just obtained. Since~$T_\rho$ is globally invariant under Galois,~$F(x)$ has coefficients in~$\Q_p$, and these coefficients approximate rational numbers within~$O(p^e)$. We can identify these rationals by rational reconstruction, provided that~$e$ is large enough.

\begin{rk}
The value of~$e$ is chosen from experience; see section \ref{sect:examples} below for concrete examples. If~$e$ is too low, rational reconstruction will most likely fail, but might also produce incorrect results. Therefore the output of our algorithm to compute~$\rho$ is not guaranteed to be correct, and should be certified by methods such as~\cite{certif}.

Arakelov theory could in principle provide us with bounds on the height of these rationals, but in practice these bounds are often extremely pessimistic, as noted in~\cite{algo}.
\end{rk}

\subsection{Saving effort thanks to the Frobenius}\label{sect:use_Frob}

We have already noted in remark \ref{rk:choose_p} that for the efficiency of the computations, we should if possible choose the prime~$p$ so that the degree~$a = [ \F_q:\F_p]$ is small. However, when this is not possible (e.g. because we cannot afford to compute the local factor~$L_p(x)$ for more than a few small~$p$), we can use the fact that~$a$ is large to our advantage.

\bigskip

Indeed,~$a$ is the order of Frobenius~$\Phi \in \Gal(\Q_q/\Q_p)$. Since the evaluation algorithm \ref{alg:eval} is (purposely!) Galois-equivariant, it commutes with~$\Phi$. This means that when we want to apply it to all the points of~$T_\rho$, we can apply it to only one point in each orbit of~$\Phi$ of~$T_\rho$, and recover the value of the other points simply by applying~$\Phi$ to the result, which is instantaneous. In particular, we only need to \emph{compute} one point of~$T_\rho$ in each orbit instead of all of them, which saves an enormous amount of time, as the orbits of~$\Phi$ on~$T_\rho$ typically have length~$a$.

Similarly, instead of lifting~$p$-adically a whole~$\Fl$-basis of~$T_\rho$ by the method presented in section \ref{sect:torslift}, we can lift a mere~$\Fl[\Phi]$-generating set, after what we can use algorithm \ref{alg:Frob} to recover an~$\Fl$-basis at high~$p$-adic accuracy. The endomorphism induced by~$\Phi$ on~$T_\rho$ is often cyclic; when this is the case, we have to lift only one point instead of~$\dim T_{\rho}$.

\bigskip

\begin{rk}
Although these ideas partly negate the inconvenience of working with a~$p$ such that~$a$ is large, choosing~$p$ so as to minimize~$a$ is still always a good idea. Indeed, the length of an orbit under~$\Phi$ is at most~$a$, so the idea presented above divides the amount of work by at most~$a$, whereas the cost of computing in~$\F_q$ and~$\Q_q$ is super-linear in~$a$.
\end{rk}

\pagebreak

\section{Examples}\label{sect:examples}

We have implemented the algorithms presented in this paper in the C language using the~\cite{gp} library, and we have applied them to a few examples. In most cases, the data we obtained could have been obtained in a much more direct way, but our goal here is to illustrate our methods.

The times given in this section are all CPU times (as opposed to wall times) on the author's laptop. The code is available on the author's personal web page~\cite{webpage} and as a Github repository~\cite{Github}.

%\subsection{A split Jacobian}

\subsection{A modular representation with particularly little ramification}

Let~$\rho : \Gal(\overline \Q / \Q) \longrightarrow \GL_2(\F_7)$ be either of the two mod 7 representations attached to the newform of weight 2 and level~$\Gamma_1(13)$. In~\cite[section 6.3]{companion} we noted that these two representations are twists of each other, and that the corresponding projective representation~$\pi : \Gal(\overline \Q / \Q) \longrightarrow \PGL_2(\F_7)$ corresponds to a particularly lightly ramified Galois number field of Galois group~$\PGL_2(\F_7)$.

As a first demonstration of the methods presented in this article, we are going to compute~$\rho$ and~$\pi$. We thus take~$\ell=7$ and~$C$ the modular curve~$C=X_1(13)$, whose Jacobian~$J$ is such that~$J[7]$ contains an~$\F_7$-subspace~$T_\rho$ of dimension 2 affording~$\rho$. Note that although~$J$ splits up to isogeny into the product of two copies of an elliptic curve defined over the real subfield of the 13th cyclotomic field, this decomposition does not occur over~$\Q$, so computing~$\rho$ through~$J$ is reasonable.

The~\cite{LMFDB} informs us that~$C$ is hyperelliptic of genus~$g=2$ and admits the minimal model
\[ C: y^2+(x^3+x+1)y = x^5+x^4. \]
We use this equation as input data; the fact that~$C$ is hyperelliptic and is a modular curve is completely ignored by our algorithm.

The~\cite{LMFDB} also reveals that the only (up to Galois)  newform of weight 2 and level 13 has coefficients in~$\Q(\sqrt{-3})$, and provides us a few of these coefficients. By reducing these coefficients modulo one of the primes of~$\Q(\sqrt{-3})$ above 7, we get for each prime~$p \not \in \{7,13\}$ the characteristic polynomial of the image of the Frobenius at~$p$ by~$\rho$. Thanks to proposition \ref{prop:mordroot}, we can therefore look for a~$p$ such that the points of~$T_\rho$ are defined over~$\Q_{p^a}$ for a small~$a \in \N$.  

We thus find that for~$p=2$ we must take~$a=8$ or~$24$, depending on which of the primes above~$7$ we consider; for~$p=3$ we need~$a=48$ or~$16$ respectively, and so on. We spot that for~$p=17$ we can take~$a=6$ for either prime above 7, so we choose to take~$p=17$ and~$a=6$; the respective characteristic polynomials being~$x^2-2x-1 \bmod 7$ and~$x^2-x-2 \bmod 7$. Choosing for instance~$\chi_\rho(x)= x^2-2x-1 \bmod 7$ (the other case being completely similar), we can now compute mod~$p^e$ a polynomial~$F(x)$ of degree~$7^2-1$ defining~$\rho$ for arbitrarily large~$e \in \N$.

We take~$e = 32$. The initialization of Makdisi's algorithms to compute in~$J$ at this accuracy takes about 50ms, the computation of an~$\F_7$-basis of~$T_\rho$ mod~$p$ takes 3.5s, lifting this basis to precision~$O(17^{32})$ takes 5.3s, computing~$\F_7$-linear combinations representing all the orbits of~$T_\rho$ under the Frobenius takes 1.8s, and evaluating a map~$J \dashrightarrow \A_1$ at these points takes 4.9s. We then form the polynomial whose roots are these values, and manage to identify its coefficients as rationals (experiments show that this would have failed with~$e=16$ instead of~$32$). All of this takes less than 10ms, and we obtain the polynomial

\tiny
\begin{align*}
F(x) =&  \ x^{48} - \frac{190}{11}x^{47} + \frac{11150}{121}x^{46} - \frac{70770}{121}x^{45} + \frac{192146}{121}x^{44} - \frac{865490}{121}x^{43} + 15942x^{42} - \frac{3378266}{121}x^{41} + \frac{7718056}{121}x^{40} - \frac{7242790}{121}x^{39}\\ + & \frac{161576962}{121}x^{38} - \frac{327843166}{121}x^{37} + \frac{1686874038}{121}x^{36} - \frac{2144483118}{121}x^{35} - \frac{3729825654}{121}x^{34} + \frac{18688301594}{121}x^{33} \\ - & \frac{23348747263}{121}x^{32} - \frac{183640041124}{121}x^{31} + \frac{334672016908}{121}x^{30} - \frac{1036146028660}{121}x^{29} + \frac{124260679100}{11}x^{28} + \frac{3580808205324}{121}x^{27}\\ - & \frac{32666349081700}{121}x^{26} + \frac{64676203524796}{121}x^{25} + \frac{94830437905560}{121}x^{24} - \frac{423358531106188}{121}x^{23} + \frac{261293629597764}{121}x^{22} \\ + & \frac{1780617009288708}{121}x^{21} - \frac{3884461192426292}{121}x^{20} + \frac{3993080217494308}{121}x^{19} - \frac{2118273836414700}{121}x^{18} + \frac{2868502387705524}{121}x^{17} \\+ & \frac{2992809907202955}{121}x^{16} - \frac{8017240457300370}{121}x^{15} + \frac{1032724415961478}{121}x^{14} + \frac{17530455675901702}{121}x^{13} + \frac{938376918522746}{121}x^{12} \\ - & \frac{21242403800528794}{121}x^{11} - \frac{6765741375874194}{121}x^{10} + \frac{812183325256186}{11}x^{9} + \frac{17495913080481536}{121}x^{8} - \frac{5797799442355694}{121}x^{7} \\ & - \frac{14434584737735526}{121}x^{6} + \frac{1659062818893114}{121}x^{5} + \frac{5000613835008606}{121}x^{4} - \frac{1428209615195030}{121}x^{3} - \frac{15369117321210}{11}x^{2} \\ & + \frac{76223729308434}{121}x + \frac{18369946454903}{77}
\end{align*}
\normalsize
which is irreducible over~$\Q$ and defines a number field of discriminant~$-7^{47} 13^{35}$. This is a strong hint that we have correctly identified the coefficients of~$F(x)$, and this can be confirmed rigorously by the methods presented in~\cite{certif}.

By forming symmetric combinations (in this case, products) of the roots of~$F(x)$ along vector lines in~$T_\rho$, we get the polynomial 

\tiny
\[
G(x) = x^8 + \frac{86}{11}x^7 - \frac{35378}{11}x^6 - 28694x^5 + \frac{26301780}{11}x^4 - \frac{729484894}{11}x^3 + 2233638278x^2 - \frac{410091777286}{11}x + \frac{18369946454903}{77}
\]
\normalsize
which is squarefree and therefore defines the projective representation~$\pi$. Using~\cite{gp}'s function \verb?polredabs? to find a canonical polynomial whose root field is the same as that of~$G(x)$ yields the polynomial
\[ G_{\text{red}}(x) = x^8 - x^7 + 7x^6 + 13x - 13 \]
whose Galois group is indeed~$\PGL_2(\F_{7})$. As predicted in~\cite[section 6.3]{companion}, the root discriminant of the splitting field of this polynomial is~$7^{7/8} 13^{5/6} = 27.269\dots$, which is remarkably low. Besides, half of the coefficients of this polynomial are zero; we had encountered the same mysterious phenomenon in~\cite[section 5.1]{companion}, and we still have no explanation.

\begin{rk}Of course, since~$C$ happens to be a modular curve, we could also have computed these representations by the techniques presented in~\cite{algo}. This would actually have taken longer, mainly because of the computation of the periods of the modular curve to high precision, and also because since these methods work over~$\C$, we do not benefit much form the speedup introduced in section \ref{sect:use_Frob} as complex conjugation has order only 2.\end{rk}

\subsection{The torsion of the Klein quartic}

Since it is overkill to use Makdisi's algorithms with hyperelliptic curves (where Mumford coordinates would be much more efficient), and in order to demonstrate the flexibility of our methods, we now experiment with the Klein quartic, given by the affine plane model
\[ C : x^3y+y^3+x=0. \]
This curve is isomorphic to the modular curve~$X(7)$ over~$\Q$, but again our algorithm completely ignores this fact. It has genus~$g=3$, and according to~\cite[section~2.3]{Elkies} its Jacobian~$J$ is isogenous over~$\overline \Q$ to the product of three copies of the elliptic curve~$X_0(49)$, which has CM by~$\Z\left[ \frac{1+\sqrt{-7}}2 \right]$. However, this decomposition does not occur over~$\Q$, since not all local factors~$L_p(x)$ of its~$L$-function are perfect cubes.
Besides,~$C$ dominates~$X_0(49)$, so the Galois representation afforded by the~$\ell$-torsion of~$X_0(49)$ also occurs in~$J[\ell]$. Let us take~$\ell=5$ for instance. By point counting, we spot that for~$p=19$, the local factor~$L_p(x)$ of~$C$ factors as
\[ L_{19}(x) = (x^2+19)(x^4-19x^2+19^2), \]
so the Galois sub-module~$T$ of~$J[5]$ on which the Frobenius at 19 has characteristic polynomial~$x^2+19$ must be isomorphic to the 5-torsion of~$X_0(49)$. Besides, luckily~$x^2+19 \equiv x^2-1 \bmod 5$, so we can take~$a=2$ only, i.e. the points of this sub-module are defined over~$\Q_{19^2}$. Finding two points of~$J(\F_{19^2})$ forming  \linebreak an~$\F_5$-basis of this sub-module takes 2.3s, lifting this basis to precision~$O(19^{32})$ takes 4.5s, forming~$\F_5$-linear combinations representing all the orbits of~$T$ under the Frobenius takes 1.2s, and evaluating the resulting points takes 3.4s. This 19-adic accuracy is sufficient to identify the polynomial

\tiny
\begin{align*}
F(x) =& \ x^{24} - \frac{307088989}{45285641}x^{23} + \frac{650200367}{45285641}x^{22} + \frac{871861629}{45285641}x^{21} - \frac{8986152651}{45285641}x^{20} + \frac{28369847905}{45285641}x^{19} - \frac{57494206833}{45285641}x^{18} \\ + & \frac{85829376881}{45285641}x^{17} - \frac{99861285810}{45285641}x^{16} + \frac{93389301219}{45285641}x^{15} - \frac{71688483976}{45285641}x^{14} + \frac{46005140069}{45285641}x^{13} - \frac{25177926620}{45285641}x^{12} \\ + & \frac{12024739657}{45285641}x^{11} - \frac{5124720540}{45285641}x^{10} + \frac{1973054346}{45285641}x^{9} - \frac{682322926}{45285641}x^{8} + \frac{207466033}{45285641}x^{7} - \frac{53995200}{45285641}x^{6} + \frac{11728577}{45285641}x^{5} \\ - & \frac{2073323}{45285641}x^{4} + \frac{288262}{45285641}x^{3} - \frac{29761}{45285641}x^{2} + \frac{2049}{45285641}x - \frac{361}{226428205}
\end{align*}
\normalsize
Thanks to~\cite{gp}, it is easy to check that this polynomial is irreducible over~$\Q$ and defines the field of definition of a point of~$X_0(49)[5]$. Of course, such a polynomial could have been obtained much more efficiently by working directly with the elliptic curve~$X_0(49)$ instead of~$C$!

\bigskip

Given a prime~$\ell~$, we can also compute the representation
\[ \rho_\ell : \Gal(\overline \Q / \Q) \longrightarrow \GSp_6(\Fl) \]
afforded by the whole of~$J[\ell]$; however this yields polynomials of degree~$\ell^6-1$, so we limit ourselves to~$\ell \le 3$.

Let us start with~$\ell =2$. By point counting, we find that the local factor at~$p=5$ is~$L_5(x)=x^6+125$, which factors mod 2 as~$(x+1)^2(x^2+x+1)^2$; therefore~$J[2]$ is defined over~$\Q_{5^6}$, so we take~$p=5$ and~$a=6$. Let~$\Phi$ be the Frobenius at~$5$. Getting an~$\F_2$-basis of~$J[2]$ over~$\F_{5^6}$ and finding the matrix of~$\Phi$ with respect to it takes 20s; the rational canonical form of this matrix is the cyclic permutation matrix
\[ \left( \begin{matrix} 0 & 0 & 0 & 0 & 0 & 1 \\ 1 & 0 & 0 & 0 & 0 & 0 \\ 0 & 1 & 0 & 0 & 0 & 0 \\ 0 & 0 & 1 & 0 & 0 & 0 \\ 0 & 0 & 0 & 1 & 0 & 0 \\ 0 & 0 & 0 & 0 & 1 & 0 \end{matrix} \right) \in M_6(\F_2), \]
which shows that the~$\Phi$-module~$J[2]$ is isomorphic to~$\F_2[C_6]$, and incidentally that the isogeny~$J \sim X_0(49)^3$ cannot be defined over~$\Q$. Since~$J[2]$ is a cyclic~$\F_2[\Phi]$-module, we can find a lift of this basis to precision~$O(5^{64})$ by lifting a single well-chosen point and taking the translates of the result under~$\Phi$; this takes~8.4s. Forming~$\F_2$-linear combinations representing all the~$\Phi$-orbits of in~$J[2] \setminus \{0\}$ takes~4s, and evaluating the resulting points takes 11.2s. It then takes less than 0.1s to compute and identify the polynomial 

\tiny
\begin{align*}
F(x) =& \ x^{63} - 161x^{60} - 1197x^{58} - 9177x^{57} - 1298x^{56} + 121520x^{55} + 2898448x^{54} + 2416904x^{53} + 4333952x^{52} - 168699608x^{51} \\ - & 476734440x^{50} - 339930164x^{49} + 446081888x^{48} + 30281870248x^{47} - 4473058548x^{46} + 205353521016x^{45} - 707077852236x^{44} \\ + & 1283224964x^{43} - 4774623916424x^{42} + 2627805825680x^{41} + 9669830893184x^{40} + 38725730084552x^{39} + 84184385317648x^{38} \\ - & 207758524883784x^{37} - 28804601640024x^{36} - 1138175980440698x^{35} + 1501648226684368x^{34} + 413550524113048x^{33} \\ + & 987469333006898x^{32} + 1602100443930840x^{31} - 10647759185665742x^{30} + 16341775658984090x^{29} - 4370878629111660x^{28} \\ - & 20130340141861936x^{27} + 7358201939838288x^{26} - 21804407903519528x^{25} + 57105590161516512x^{24} - 1973641573147048x^{23}\\ - & 80209977632969240x^{22} + 122014915556934988x^{21} + 97865010867835456x^{20} - 13990286302239912x^{19} + 95842501513026044x^{18} \\ + & 107824065162314088x^{17} + 102967780530998516x^{16} + 81363980423822628x^{15} - 46964208347978888x^{14} - 11842872637248016x^{13} \\ + & 34046835258483488x^{12} - 61791525835015592x^{11} - 151364668861661808x^{10} - 87421807751514936x^{9} - 21557279135069608x^{8} \\ - & 71522018862910623x^{7} - 67396074567438960x^{6} - 27838001535856792x^{5} - 13272758366940569x^{4} - 15251476649272248x^{3} \\  - & 2716106249233965x^{2} + 4090304480390807x + 1985141227354766
\end{align*}
\normalsize
which is squarefree and factors over~$\Q$ into one factor of degree 1, one of degree 2, two of degree 3, and nine of degree 6.

 It turns out that these factors all define a subfield of the 7th cyclotomic field~$\Q(\zeta_7)$, so, assuming that we have correctly identified the coefficients of~$F(x)$, this means that the field of definition of~$J[2]$ is~$\Q(\zeta_7)$. This is an Abelian number field, and since 5 is a primitive root mod 7, its Galois group is generated by~$\Phi$, the Frobenius at~$5$. Therefore, the image of~$\rho_2$ is cyclic of order 6 and generated by~$\rho_2(\Phi)$, which is the matrix displayed above.

\bigskip

Finally, we redo the same computation, but with~$\ell=3$. Finding a \linebreak prime~$p \not \in \{3,7 \}$ such that~$J[3]$ is defined over a small extension of~$\Q_p$ is not straightforward since the local factor~$L_p(x)$ is always a cube mod 3. In view of the fields in presence, we try~$p=43$ since it is 1 both mod 3 and mod 7, and indeed a little experimentation shows that~$J[3]$ is defined over~$\Q_{43^4}$.

Getting an~$\F_3$-basis of~$J[3]$ over~$\F_{43^4}$ takes 34s, lifting this basis to precision~$O(43^{256})$ takes 120s, forming~$\F_3$-linear combinations representing the orbits under the Frobenius takes 4m, and evaluating the resulting points takes 9m20s. We obtain a polynomial~$F(x)$ of degree 728 whose coefficients are rationals with numerators of up to 163 digits and always the same 133-digit denominator up to small primes, and which is squarefree and factors over~$\Q$ into one factor of degree 8, four of degree 24, and thirteen of degree 48.

These factors all define subfields of %the root field~$L$ of the polynomial~$x^{48} - 15x^{47} + 93x^{46} - 286x^{45} + 344x^{44} + 358x^{43} - 1279x^{42} + 203x^{41} - 754x^{40} + 14497x^{39} - 27988x^{38} - 13360x^{37} + 105493x^{36} - 113186x^{35} - 36118x^{34} + 176997x^{33} - 146593x^{32} + 33159x^{31} + 26355x^{30} - 85676x^{29} + 189339x^{28} - 114985x^{27} - 195848x^{26} + 232592x^{25} + 189521x^{24} - 352811x^{23} - 55435x^{22} + 337614x^{21} - 111612x^{20} - 146492x^{19} + 92848x^{18} + 54411x^{17} - 67766x^{16} - 4814x^{15} + 44991x^{14} - 32609x^{13} + 12309x^{12} - 4911x^{11} + 3736x^{10} - 2919x^{9} + 1528x^{8} - 525x^{7} + 212x^{6} - 111x^{5} + 57x^{4} - 20x^{3} + 2x^{2} - x + 1$. This field is actually
a number field~$L$ of degree 48 which is Galois, totally imaginary, and has discriminant~$3^{42} 7^{44}$ (in particular its root discriminant is~$3^{7/8} 7^{11/12} = 15.565\cdots$, which is very close to the record given in table 3 p. 134 of~\cite{Odlyzko}).

Thanks to~\cite{Magma} and~\cite{Gap}, we can determine that its Galois group is a non-Abelian nilpotent group of order 48, isomorphic to the direct product of a cyclic group of order 3 and of the normalizer of a non-split Cartan of~$\GL_2(\F_3)$. This is not surprising, since the image of the mod 3 representation attached to the elliptic curve~$X_0(49)$ is precisely this normalizer. In fact,~$L$ turns out to be the compositum of the real subfield~$\Q(\zeta_7)^+$ of the 7-th cyclotomic field and of the field of definition of the 3-torsion of~$X_0(49)$. Besides, the factor of degree 8 of~$F(x)$ defines the field of definition of a point of~$X_0(49)[3]$, and the factors of degree 24 define the compositum of this field and of ~$\Q(\zeta_7)^+$.

\subsection{A higher genus example}

Finally, in~\cite{H2}, we used the methods presented in this article to compute a~$\GL_3(\F_9)$-valued Galois representation afforded by a~6-dimensional~$\F_3$-subspace of the~3-torsion of the Jacobian of a curve of genus~$g=7$. Unlike the previous computations that took place on the author's laptop, this computation took place on a computing cluster with 32 cores provided by Warwick mathematics institute. It only took about one hour of wall time thanks to parallelisation, which demonstrates that our methods are also suitable to higher genera. The CPU times were as follows: getting a basis of this~6-dimensional subspace of~$J(\F_q)[3]$ (with $q=11^{14}$ in this case) took about~2h, lifting this basis to $p$-adic accuracy~$O(11^{1024})$ took~3.5h, forming linear combinations of these points representing all the~$104$ orbits under the Frobenius at~$p=11$ took~6h, and evaluating these points took~12.5h.

It is interesting to note that even for such a large genus, the step of highest complexity, namely lifting $p$-adically the torsion points by a combination of algorithms~\ref{alg:lift},~\ref{alg:lifttors_mul}, and~\ref{alg:lifttors_chart} whose complexity is $O(g^{\omega+3})$, is actually less time-consuming than the subsequent steps, even though their complexity is only~$O(g^\omega)$. This is not so surprising, since the~$O(g^{\omega+3})$~complexity is only due to the very last lines of algorithm~\ref{alg:lift}.

\bigskip

In view of these encouraging results, we plan to use this new $p$-adic method to try to beat the genus records set in \cite{algo} for Galois representations attached to modular forms in the near future.

\newpage

\end{document}